\newglossaryentry{Omega}{
	name=\ensuremath{\Omega},
	description={open set or domain in $\mathbb{R}^3$},
	type=symbols
}
\newglossaryentry{mathbbL2}{
	name=\ensuremath{\mathbb{L}^2(\Omega)},
	description={space of square-integrable symmetric tensors defined over $\Omega$},
	type=symbols
}
\newglossaryentry{Lp0TX}{
	name=\ensuremath{L^p(0,T;X)},
	description={space of $X$-valued functions that are $L^p$ integrable in $(0,T)$},
	type=symbols
}
\newglossaryentry{Xast}{
	name=\ensuremath{X^\ast},
	description={dual space of the Banach space $X$},
	type=symbols
}
\newglossaryentry{duality}{
	name=\ensuremath{\langle \cdot, \cdot \rangle_{X^\ast, X}},
	description={duality pair between $X^\ast$ and $X$},
	type=symbols
}
\newglossaryentry{C0TX}{
	name=\ensuremath{\mathcal{C}^0(0,T;X)},
	description={space of $X$-valued functions that are continuous in $[0,T]$},
	type=symbols
}
\newglossaryentry{dualityC}{
	name=\ensuremath{\langle\langle\cdot,\cdot\rangle\rangle_{X^\ast,X}},
	description={duality pair between $(\mathcal{C}^0([0,T];X))^\ast$ and $\mathcal{C}^0([0,T];X)$},
	type=symbols
}
\newglossaryentry{M0TX}{
	name=\ensuremath{\mathcal{M}([0,T];X^\ast)},
	description={space of $X^\ast$-valued measures defined over the compact interval $[0,T]$},
	type=symbols
}
\newglossaryentry{doteta}{
	name=\ensuremath{\dot{\eta}},
	description={first weak derivative with respect to the time variable of the function $\eta:(0,T)\to \mathbb{R}$},
	type=symbols
}
\newglossaryentry{ddoteta}{
	name=\ensuremath{\ddot{\eta}},
	description={second weak derivative with respect to the time variable of the function $\eta:(0,T)\to \mathbb{R}$},
	type=symbols
}
\newglossaryentry{dotbmeta}{
	name=\ensuremath{\dot{\bm{\eta}}},
	description={first weak derivative with respect to the time variable of the function $\bm{\eta}:(0,T)\to X$},
	type=symbols
}
\newglossaryentry{ddotbmeta}{
	name=\ensuremath{\ddot{\bm{\eta}}},
	description={second weak derivative with respect to the time variable of the function $\bm{\eta}:(0,T)\to X$},
	type=symbols
}
\newglossaryentry{Gamma}{
	name=\ensuremath{\Gamma},
	description={boundary of $\Omega$},
	type=symbols
}
\newglossaryentry{dx}{
	name=\ensuremath{\mathrm{d}x},
	description={volume element of $\Omega$},
	type=symbols
}
\newglossaryentry{dGamma}{
	name=\ensuremath{\mathrm{d}\Gamma},
	description={area element along $\Gamma$},
	type=symbols
}
\newglossaryentry{Gamma0}{
	name=\ensuremath{\Gamma_0},
	description={portion of $\Gamma$ where homogeneous Dirichlet boundary conditions of place are imposed},
	type=symbols
}
\newglossaryentry{Gamma1}{
	name=\ensuremath{\Gamma_1},
	description={complementary of $\Gamma_0$ on $\Gamma$},
	type=symbols
}
\newglossaryentry{E3}{
	name=\ensuremath{\mathbb{E}^3},
	description={real three-dimensional affine Euclidean space},
	type=symbols
}
\newglossaryentry{ei}{
	name=\ensuremath{\bm{e}^i},
	description={vectors constituting an orthonormal basis for $\mathbb{E}^3$},
	type=symbols
}
\newglossaryentry{adotb}{
	name=\ensuremath{\bm{a}\cdot\bm{b}},
	description={Euclidean inner product between the vectors $\bm{a}$ and $\bm{b}$ in $\mathbb{E}^3$},
	type=symbols
}
\newglossaryentry{norma}{
	name=\ensuremath{|\bm{a}|},
	description={Euclidean norm of the vector $\bm{a}\in\mathbb{R}^3$},
	type=symbols
}
\newglossaryentry{deltaij}{
	name=\ensuremath{\delta^{ij}},
	description={Kronecker symbol},
	type=symbols
}
\newglossaryentry{lambda}{
	name=\ensuremath{\lambda},
	description={Lam\'{e} constant with the property of being non-negative},
	type=symbols
}
\newglossaryentry{mu}{
	name=\ensuremath{\mu},
	description={Lam\'{e} constant with the property of being strictly positive},
	type=symbols
}
\newglossaryentry{theta}{
	name=\ensuremath{\theta},
	description={viscosity constant with the property of being non-negative},
	type=symbols
}
\newglossaryentry{xi}{
	name=\ensuremath{\xi},
	description={viscosity constant with the property of being strictly positive},
	type=symbols
}
\newglossaryentry{rho}{
	name=\ensuremath{\rho},
	description={mass density},
	type=symbols
}
\newglossaryentry{a.e.}{
	name=\ensuremath{\textup{a.e.}},
	description={almost everywhere},
	type=symbols
}
\newglossaryentry{a.a.}{
	name=\ensuremath{\textup{a.a.}},
	description={almost all},
	type=symbols
}
\newglossaryentry{VOmega}{
	name=\ensuremath{\bm{V}(\Omega)},
	description={$\{\bm{v}=(v_i)\in \bm{H}^1(\Omega); \bm{v}={\bf{0}} \dd\Gamma-\textup{a.e. on } \Gamma_0\}$},
	type=symbols
}
\newglossaryentry{tr}{
	name=\ensuremath{\textup{tr}},
	description={trace operator from $\bm{V}(\Omega)$ onto $\bm{L}^2(\Gamma)$},
	type=symbols
}
\newglossaryentry{A}{
	name=\ensuremath{A^{ijk\ell}},
	description={fourth order three-dimensional elasticity tensor in Cartesian coordinates},
	type=symbols
}
\newglossaryentry{B}{
	name=\ensuremath{B^{ijk\ell}},
	description={fourth order three-dimensional viscosity tensor in Cartesian coordinates},
	type=symbols
}
\newglossaryentry{eij}{
	name=\ensuremath{e_{i\|j}},
	description={linearised change of metric tensor in Cartesian coordinates},
	type=symbols
}
\newglossaryentry{c0}{
	name=\ensuremath{c_0},
	description={constant of the Korn's inequality (Theorem~\ref{KornCart})},
	type=symbols
}
\newglossaryentry{AA}{
	name=\ensuremath{\mathcal{A}},
	description={elasticity operator in the divergence form},
	type=symbols
}
\newglossaryentry{Ce1}{
	name=\ensuremath{C_e^{(1)}},
	description={uniform positive-definiteness constant for the fourth order three-dimensional elasticity tensor $\{A^{ijk\ell}\}$},
	type=symbols
}
\newglossaryentry{BB}{
	name=\ensuremath{\mathcal{B}},
	description={viscosity operator in the divergence form},
	type=symbols
}
\newglossaryentry{Ce2}{
	name=\ensuremath{C_e^{(2)}},
	description={uniform positive-definiteness constant for the fourth order three-dimensional viscosity tensor $\{B^{ijk\ell}\}$},
	type=symbols
}
\newglossaryentry{q}{
	name=\ensuremath{\bm{q}},
	description={given unit-norm vector in $\mathbb{E}^3$},
	type=symbols
}
\newglossaryentry{I}{
	name=\ensuremath{\bm{I}},
	description={undeformed reference configuration of the linearly viscoelastic body},
	type=symbols
}
\newglossaryentry{H}{
	name=\ensuremath{\mathbb{H}},
		description={orthogonal complement of $\bm{q}$ and half-space where the viscoelastic body has to remain confined},
	type=symbols
}
\newglossaryentry{UOmega}{
	name=\ensuremath{\bm{U}(\Omega)},
	description={set of admissible displacements},
	type=symbols
}
\newglossaryentry{f}{
	name=\ensuremath{\bm{f}},
	description={applied body force},
	type=symbols
}
\newglossaryentry{u0}{
	name=\ensuremath{\bm{u}_0},
	description={initial displacement},
	type=symbols
}
\newglossaryentry{u1}{
	name=\ensuremath{\bm{u}_1},
	description={initial velocity},
	type=symbols
}
\newglossaryentry{kappa}{
	name=\ensuremath{\kappa},
	description={positive penalty parameter intended to tend to zero},
	type=symbols
}
\newglossaryentry{f-}{
	name=\ensuremath{\{f\}^{-}},
	description={$-\min\{f,0\}$},
	type=symbols
}
\newglossaryentry{N}{
	name=\ensuremath{\mathcal{N}},
	description={distributional formulation of the penalty term},
	type=symbols
}
\newglossaryentry{trstar}{
	name=\ensuremath{\textup{tr}^\star},
	description={Banach adjoint of $\textup{tr}$},
	type=symbols
}
\newglossaryentry{tildeN}{
	name=\ensuremath{\tilde{\mathcal{N}}},
	description={time-dependent version of $\mathcal{N}$},
	type=symbols
}
\newglossaryentry{tildeA}{
	name=\ensuremath{\tilde{\mathcal{A}}},
	description={time-dependent version of $\mathcal{A}$},
	type=symbols
}
\newglossaryentry{tildeB}{
	name=\ensuremath{\tilde{\mathcal{B}}},
	description={time-dependent version of $\mathcal{B}$},
	type=symbols
}
\newglossaryentry{AC}{
	name=\ensuremath{AC([0,T])},
	description={space of absolutely continuous functions over $[0,T]$},
	type=symbols
}
\newtheorem{theorem}{Theorem}[section]
\newtheorem{lemma}{Lemma}[section]
\providecommand{\customgenericname}{}
\newcommand{\newcustomproblem}[2]{%
	\newenvironment{#1}[1]
	{%
		\renewcommand\customgenericname{#2}%
		\renewcommand\theinnercustomgeneric{##1}%
		\innercustomgeneric
	}
	{\endinnercustomgeneric}
}
\newcommand*{\bqed}{\hfill\ensuremath{\blacksquare}}%
\newcommand{\wsc}{\overset{\ast}{\rightharpoonup}}%
\def\dd{\, \mathrm{d}}
\begin{document}
	
	\today
	
	
	\title[Time-dependent Signorini-type problems in viscoelasticity]{Existence of solutions for time-dependent Signorini-type problems in linearised viscoelasticity}
	
	
	\author{Paolo Piersanti}
	\address{School of Science and Engineering, The Chinese University of Hong Kong (Shenzhen), 2001 Longxiang Blvd., Longgang District, Shenzhen, China}
	\email{ppiersanti@cuhk.edu.cn}

	\begin{abstract}
		In this paper we establish the existence of solutions for a model describing the evolution of a linearly viscoelastic body which is constrained to remain confined in a prescribed half-space. The confinement condition under consideration is of Signorini type, and is given over the boundary of the linearly viscoelastic body under consideration. We show that one such variational problem admits solutions and we coin a novel concept of solution which, differently from the available literature, is valid even in the case where the viscoelastic body starts its motion in contact with the obstacle. Additionally, under additional assumptions on the constituting material, we show that when the applied body force is lifted the deformed linearly viscoelastic body returns to its rest position at an exponential rate of decay.
		
		\smallskip
		\noindent \textbf{Keywords.} Variational Inequalities $\cdot$ Penalty method $\cdot$ Asymptotic Analysis $\cdot$ Obstacle Problems
		
		\smallskip
		\noindent \textbf{MSC~2020.} 35L85, 47H07, 74B10.
	\end{abstract}
	
	\maketitle
	
\section{Introduction} \label{Sec:0}

The contact of elastic and viscoelastic materials with rigid obstacles arises in many applicative fields, including biomedical engineering, aerospace design, and materials science. For instance, the motion of aortic heart valves - modelled as linearly elastic shells constrained within a spatial domain to avoid collisions - has inspired mathematical frameworks for obstacle problems in elasticity \cite{HanSofonea2002,HHNL1988,Rodri2018,SHS06}. Similarly, the confinement of deformable structures like polymer films or viscoelastic membranes in prescribed half-spaces is critical for applications ranging from soft robotics to biomechanical implants \cite{CambridgeVisco2013}. Recently, the theory of obstacle problems was used to model ice melting sheets~\cite{FRS24,JB12,PieTem2023}.

The mechanical deformation of linearly viscoelastic bodies is classically modelled using energy functionals from linearized elasticity theory \cite{Ciarlet1988,HanSofonea2002}. Recent advances extend these principles to dynamic and time-dependent settings. For example, Bock and Jaru\v{s}ek analysed unilateral dynamic contact problems for viscoelastic Reissner-Mindlin plates \cite{BJ11}, while \cite{BockJarSil2016} established existence results for thermoelastic von K\'{a}rm\'{a}n plates vibrating against rigid obstacles. A key challenge in such problems lies in reconciling hyperbolic variational inequalities for displacements with memory-dependent constitutive laws, as seen in quasi-static viscoelasticity with self-contact \cite{KR20,CGK24}. The latter introduces novel solution concepts distinct from classical frameworks \cite{Bre72,Lions1969}, emphasising admissible forces that preclude terminal-phase collisions.

Recent studies highlight unresolved questions in viscoelastic obstacle problems, such as the interplay between memory kernel decay rates and solution regularity \cite{ViscoQuestions2022}. These align with broader inquiries into non-local integro-differential equations governing viscoelastic dynamics, where semigroup methods and spectral analysis remain pivotal \cite{ViscoQuestions2022}. Meanwhile, confinement conditions in linearized elasticity, particularly for three-dimensional bodies in half-spaces, demand sophisticated Sobolev-space analyses~\cite{EBJ05}.

For what concerns the study of the dynamics linearly elastic bodies subjected to remaining confined in a prescribed half-space, we refer to~\cite{Pie2020}. The existence result in this paper is established by resorting to vector-valued measures in the sense of Dinculeanu. The latter approach requires some strong assumptions, which we aim to overcome in the present paper.

In this paper, we formulate a three-dimensional obstacle problem for a time-dependent linearly viscoelastic body constrained to a half-space. To the best of our knowledge, the current literature does not address the study of time-dependent obstacle problems for three-dimensional viscoelastic bodies subjected to a confinement condition Signorini-type. For completeness, we mention the textbook by Han \& Sofonea~\cite{HanSofonea2002} where quasi-static viscoelastic problems are studied by means of a different concept of solution from the one employed here. Additionally, we mention the recent papers~\cite{BD16,BKK15,ZBC25} where the quasi-static Signorini problems in viscoelasticity are considered from the analytical and numerical point of view.

Our approach synthesizes ideas from \cite{CGK24}, where forces appear to avoid terminal-phase contact, and the geometric confinement frameworks of \cite{CiaPie2018b}. Critical to establish the existence of solutions for the problem considered here is the compactness for the \emph{time-dependent version} of the trace operator (Lemma~\ref{lem:tr}), which appears to be a novel result to the best of our knowledge.
We also mention the recent paper~\cite{Han26}, where the authors consider a Signorini problem in linearised viscoelasticity with a contact condition for both the displacement and the velocity. In this contribution, the authors penalise the problem and obtain estimates depending on a parameter.

In addition to the latter, the novelties introduced in this paper are:
\begin{itemize}
	\item[$(1)$] The derivation of a sound concept of solution for a time-dependent obstacle problem of Signorini type in linearised viscoelasticity, which aligns to the classical concept of solution for obstacle problems for elliptic models;
	\item[$(2)$] The methodology here presented, differently from~\cite{CGK24}, takes an initial condition for the velocity into consideration. Additionally, our proposed methodology does not take into account the terminal condition implicitly assumed in~\cite{CGK24};
	\item[$(3)$] Our methodology takes into account a confinement condition of Signorini type rather than the Ciarlet-Ne\v{c}as condition (see~\cite{CN87}) considered in~\cite{CGK24};
	\item[$(4)$] Differently from~\cite{BJ10,BJ11,BJ15,BockJarSil2016}, the methodology here presented applies to obstacle problems formulated in terms of vectorial magnitudes;
	\item[$(5)$] By contrast with~\cite{Han26}, we only impose the constraint on the displacement and we consider non-zero initial data and we do not assume additional regularity for the solution;
	\item[$(6)$] By contrast with the recent works~\cite{CiaPie2018b,Pie2020}, our methodology has the advantage of applying to bodies which start their motion in contact with the obstacle. Moreover, all the extra assumptions made in~\cite{Pie2020} are dropped, suggesting that viscoelastic models are more suitable for studying the dynamics of elastic bodies making contact with obstacles during their motion. This statement is corroborated by the conclusion derived in Theorem~\ref{th:decay}, where we show that when the applied body force is released the body tends to return to its original reference configuration.
\end{itemize}

The concept of solution for the governing model is recovered by two layer of approximations: in the first layer, we \emph{relax} the constraint by penalising the classical energy in linearised viscoelasticity, while in the second layer we discretise the \emph{relaxed} problem by Galerkin method.

The paper is divided into four sections, including this one. In section~\ref{Sec:1} we present the main notation from elasticity and differential geometry. In section~\ref{Sec:2} we formulate the obstacle problem for a three-dimensional viscoelastic body. In section~\ref{Sec:3}, we recover the existence of solutions for the model under consideration by means of compactness methods.

\section{Geometrical preliminaries} \label{Sec:1}

For details about the classical notions of differential geometry recalled in this section, see, e.g.~\cite{Ciarlet2000} or \cite{Ciarlet2005}.

Latin indices, except when they are used for indexing sequences, take their values in the set $\{1,2,3\}$, and the summation convention with respect to repeated indices is systematically used in conjunction with this rule.

For an open subset $\gls{Omega} \subset \mathbb{R}^3$, the notations $L^2(\Omega)$ and $H^1(\Omega)$ refer to the standard Lebesgue and Sobolev spaces, respectively. The notation $\mathcal D(\Omega)$ indicates the space of functions that are infinitely differentiable on $\Omega$ and have compact support within $\Omega$. The notation $|\cdot|_X$ denotes the norm in a vector space $X$. Spaces of vector-valued functions are represented by boldface letters like for instance $\bm{L}^2(\Omega)$ or $\bm{H}^1(\Omega)$. blueSpaces of symmetric tensors are denoted in blackboard bold letters like for instance \gls{mathbbL2}.

Lebesgue-Bochner spaces (see, e.g., \cite{Leoni2017}) are represented by the notation \gls{Lp0TX}, where $1 \le p \le \infty$, $T>0$ and $X$ is a Banach space satisfying the Radon-Nikodym property. The notation \gls{Xast} refers to the dual space of a vector space $X$, and the notation \gls{duality} denotes the duality pairing between $X^\ast$ and $X$. The notation \gls{C0TX} designates the space of continuous $X$-valued functions defined over the compact interval $[0,T]$.

The notations \gls{doteta} and \gls{ddoteta} represent the first weak derivative with respect to $t \in (0,T)$ and the second weak derivative with respect to $t \in (0,T)$ of a scalar function $\eta$ defined over the interval $(0,T)$. The notations \gls{dotbmeta} and \gls{ddotbmeta} denote the first weak derivative with respect to $t \in (0,T)$ and the second weak derivative with respect to $t \in (0,T)$ of a vector field $\bm{\eta}$ defined over the interval $(0,T)$.

A \emph{Lipschitz domain} $\Omega \subset \mathbb{R}^3$ is a non-empty, open, bounded, and connected subset with a Lipschitz continuous boundary \gls{Gamma}, where the set $\Omega$ is locally on the same side of $\Gamma$. The notation \gls{dx} indicates the \emph{volume element} in $\Omega$, and the symbol \gls{dGamma} indicates the \emph{area element} along $\Gamma$. For more details about this definition see, for instance, Section~8.2 in~\cite{Cia25}.
Finally, let $\Gamma=\gls{Gamma0} \cup \gls{Gamma1}$ be a $\dd \Gamma$-measurable portion of the boundary such that $\Gamma_0 \cap \Gamma_1 =\emptyset$ and $\textup{area }\Gamma_0>0$.

As a model of the three-dimensional ``physical space'' $\mathbb{R}^3$, we consider a \emph{real three-dimensional affine Euclidean space}, which is defined by selecting a point $O$ as the \emph{origin} and associating it with a \emph{real three-dimensional Euclidean space}, denoted \gls{E3}. We equip $\mathbb{E}^3$ with an \emph{orthonormal basis} consisting of three vectors \gls{ei}. The Euclidean inner product of two elements $\bm{a}$ and $\bm{b}$ in $\mathbb{E}^3$ is denoted by \gls{adotb}; the Euclidean norm of any $\bm{a} \in \mathbb{E}^3$ is denoted by \gls{norma}; the Kronecker symbol is denoted by \gls{deltaij}.

The characterization of $\mathbb{R}^3$ as an affine Euclidean space implies that with each point $x \in \mathbb{R}^3$ is associated a uniquely defined vector $\bm{Ox} \in \mathbb{E}^3$. The origin $O \in \mathbb{R}^3$ and the orthonormal vectors $\bm{e}^i \in \mathbb{E}^3$ together form a \emph{Cartesian frame} in $\mathbb{R}^3$, and the three components $x_i$ of the vector $\bm{Ox}$ relative to the basis $\bm{e}^i$ are referred to as the \emph{Cartesian coordinates} of $x \in \mathbb{R}^3$, or the \emph{Cartesian components} of $\bm{Ox} \in \mathbb{E}^3$. Once a Cartesian frame has been established, any point $x \in \mathbb{R}^3$ can be \emph{identified} with the vector $\bm{Ox}=x_i \bm{e}^i \in \mathbb{E}^3$. We then denote $\partial_i=\partial/\partial x_i$.

The set $\overline{\Omega}$ is the \emph{reference configuration} occupied by a \emph{linearly viscoelastic body} in the absence of applied forces. We assume that $\overline{\Omega}$ is in a natural state, meaning the body is stress-free in this configuration. Following \cite{Ciarlet1988}, we also assume that the material is \emph{isotropic}, \emph{homogeneous}, and \emph{linearly viscoelastic}. Under these conditions, the behaviour of the linearly elastic material is governed by its two \emph{Lamé constants} $\gls{lambda} \ge 0$ and $\gls{mu} > 0$. We denote the viscosity constants by $\gls{theta} \ge 0$ and $\gls{xi}>0$. The positive constant $\gls{rho}$ designates the \emph{mass density} of the linearly viscoelastic body per unit volume.

In what follows, ``\gls{a.e.}'' stands for ``almost everywhere'' and ``\gls{a.a.}'' stands for ``almost all''. Define the space 
\begin{equation*}
\gls{VOmega}:=\{\bm{v}=(v_i)\in \bm{H}^1(\Omega); \bm{v}={\bf{0}}\, \dd\Gamma\textup{-a.e. on } \Gamma_0\},
\end{equation*}
and equip it with the norm
\begin{equation*}
\|\bm{v}\|_{\bm{V}(\Omega)}:=\left(\sum_{i} \|v_i\|_{H^1(\Omega)}^2\right)^{1/2}.
\end{equation*}

Note that in the definition of the space $\bm{V}(\Omega)$ we require the trace of any element $\bm{v}$ to vanish on $\Gamma_0$ up to a zero-measure subset of $\Gamma_0$ with respect to the measure of surfaces. The trace operator is denoted and defined by $\gls{tr}:\bm{V}(\Omega)\to \bm{L}^2(\Gamma)$, and we recall that $\textup{tr}$ is linear, continuous and compact~\cite{Nec67}.

Define the fourth order three-dimensional elasticity tensor in Cartesian coordinates and denote its components by \gls{A}.
We recall that the contravariant components of this tensor are defined by (see, e.g., \cite{Ciarlet1988}) 
\begin{equation*}
A^{ijk\ell}:=\lambda \delta^{ij}\delta^{k\ell} +\mu (\delta^{ik}\delta^{j\ell}+\delta^{i\ell}\delta^{jk}),
\end{equation*}
and that $A^{ijk\ell}=A^{jik\ell}=A^{k\ell ij} \in \mathcal{C}^1(\overline{\Omega})$.

We then define the fourth order three-dimensional viscosity tensor in Cartesian coordinates and we denote its components by \gls{B}.
We recall that the contravariant components of this tensor are defined by (see, e.g., \cite{HanSofonea2002})
\begin{equation*}
	B^{ijk\ell}:=\theta \delta^{ij}\delta^{k\ell} +\xi (\delta^{ik}\delta^{j\ell}+\delta^{i\ell}\delta^{jk}),
\end{equation*}
and that $B^{ijk\ell}=B^{jik\ell}=B^{k\ell ij} \in \mathcal{C}^1(\overline{\Omega})$.

For each $\bm{v} \in \bm{H}^1(\Omega)$ we consider the \emph{linearised change of metric tensor in Cartesian coordinates} $\bm{e}(\bm{v})$, whose components $e_{i\|j}(\bm{v})$ are defined by:
\begin{equation*}
\gls{eij}(\bm{v}):=\dfrac{1}{2}(\partial_j v_i +\partial_i v_j) \in L^2(\Omega).
\end{equation*}

This tensor is symmetric, i.e., $e_{i\|j}(\bm{v})=e_{j\|i}(\bm{v})$, for all $\bm{v} \in \bm{H}^1(\Omega)$.
We now state \emph{Korn's inequality} in Cartesian coordinates (see, e.g., Theorem~6.3-4 of~\cite{Ciarlet1988}).
\begin{theorem}
	\label{KornCart}
	Let $\Omega$ be a Lipschitz domain in $\mathbb{R}^3$ and let $\Gamma_0$ be a non-zero area subset of the whole boundary $\Gamma$.
	Then, there exists a constant $\gls{c0}>0$ such that
	$$
	c_0^{-1}\|\bm{v}\|_{\bm{H}^1(\Omega)} \le \|\bm{e}(\bm{v})\|_{\mathbb{L}^2(\Omega)}\le c_0\|\bm{v}\|_{\bm{H}^1(\Omega)},
	$$
	for all $\bm{v} \in \bm{V}(\Omega)$.
	\qed
\end{theorem}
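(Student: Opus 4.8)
The right-hand inequality is immediate and carries no content: since $e_{i\|j}(\bm{v}) = \tfrac12(\partial_j v_i + \partial_i v_j)$, a single application of the triangle inequality in $L^2(\Omega)$ gives $\|\bm{e}(\bm{v})\|_{\bm{L}^2(\Omega)} \le \|\bm{v}\|_{\bm{H}^1(\Omega)}$, so the constant on that side may be taken to be $1$. The substance of the statement is the left-hand inequality, and the plan is the classical two-step route: first prove Korn's inequality \emph{without boundary condition} on all of $\bm{H}^1(\Omega)$, then upgrade it to the stated form using the constraint on $\Gamma_0$ via a compactness argument.

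First I would establish that there is a constant $c_1 = c_1(\Omega) > 0$ with
\begin{equation*}
\|\bm{v}\|_{\bm{H}^1(\Omega)} \le c_1 \big( \|\bm{v}\|_{\bm{L}^2(\Omega)} + \|\bm{e}(\bm{v})\|_{\bm{L}^2(\Omega)} \big), \qquad \text{for all } \bm{v} \in \bm{H}^1(\Omega).
\end{equation*}
The analytic core is the lemma of J.-L.~Lions, valid on a Lipschitz domain: if a distribution $w \in \mathcal{D}'(\Omega)$ satisfies $w \in H^{-1}(\Omega)$ and $\partial_i w \in H^{-1}(\Omega)$ for every $i$, then in fact $w \in L^2(\Omega)$, with a corresponding norm estimate. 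One applies this with $w = \partial_k v_i$: for $\bm{v} \in \bm{H}^1(\Omega)$ one has $\partial_k v_i \in L^2(\Omega) \subset H^{-1}(\Omega)$, while the distributional identity
\begin{equation*}
\partial_j(\partial_k v_i) = \partial_j e_{i\|k}(\bm{v}) + \partial_k e_{i\|j}(\bm{v}) - \partial_i e_{j\|k}(\bm{v})
\end{equation*}
shows that $\partial_j(\partial_k v_i) \in H^{-1}(\Omega)$ as soon as $\bm{e}(\bm{v}) \in \bm{L}^2(\Omega)$. Lions' lemma then yields $\partial_k v_i \in L^2(\Omega)$ with norm controlled by $\|v_i\|_{L^2(\Omega)} + \|\bm{e}(\bm{v})\|_{\bm{L}^2(\Omega)}$; summing over $i$ and $k$ gives the displayed inequality.

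Next I would pass to the stated inequality by contradiction. If no $c_0$ works, there is a sequence $(\bm{v}^n) \subset \bm{V}(\Omega)$ with $\|\bm{v}^n\|_{\bm{H}^1(\Omega)} = 1$ for all $n$ and $\|\bm{e}(\bm{v}^n)\|_{\bm{L}^2(\Omega)} \to 0$. Since $\Omega$ is a bounded Lipschitz domain, the embedding $\bm{H}^1(\Omega) \hookrightarrow \bm{L}^2(\Omega)$ is compact (Rellich–Kondrachov), so a subsequence (not relabelled) converges strongly in $\bm{L}^2(\Omega)$; applying Korn's inequality without boundary condition to the differences $\bm{v}^n - \bm{v}^m$ shows $(\bm{v}^n)$ is Cauchy in $\bm{H}^1(\Omega)$, hence converges strongly in $\bm{H}^1(\Omega)$ to some $\bm{v}$ with $\|\bm{v}\|_{\bm{H}^1(\Omega)} = 1$, with $\bm{e}(\bm{v}) = \bm{0}$, and, by continuity of the trace and closedness of $\bm{V}(\Omega)$, with $\bm{v} \in \bm{V}(\Omega)$. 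Finally I would invoke the characterisation of the kernel of $\bm{e}$: on the connected open set $\Omega$ the identity above forces all second derivatives of each $v_i$ to vanish, so $v_i$ is affine, and the skew-symmetry of $\partial_i v_j$ then pins it down to an infinitesimal rigid displacement $\bm{v}(x) = \bm{a} + \bm{b} \wedge \bm{Ox}$ with constant $\bm{a}, \bm{b} \in \mathbb{E}^3$. Such a field vanishing on $\Gamma_0$, a portion of $\Gamma$ with positive area, must satisfy $\bm{a} = \bm{b} = \bm{0}$, whence $\bm{v} = \bm{0}$, contradicting $\|\bm{v}\|_{\bm{H}^1(\Omega)} = 1$.

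The step I expect to be the real obstacle is Lions' lemma itself, whose proof on a merely Lipschitz domain is delicate — it relies on localisation through bi-Lipschitz boundary charts together with a Nečas-type $L^2$ estimate for distributions whose gradient lies in $H^{-1}$. In a write-up I would either cite it directly, as in Theorem~6.3-6 of~\cite{Ciarlet1988}, or isolate it as a preliminary lemma; the remaining ingredients (the algebraic identity, Rellich–Kondrachov compactness, and the rigid-displacement characterisation) are routine.
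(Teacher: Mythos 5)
The paper does not supply a proof of this theorem: it is stated with a \qed and recalled as a classical result, with the following paragraph pointing the reader to Theorem~6.3-6 of~\cite{Ciarlet1988} and a list of other references. Your argument is correct and is precisely the route taken in that cited source --- the easy right-hand bound, Lions' lemma to get Korn without boundary conditions, then a compactness/contradiction step in which the limit is forced to be an infinitesimal rigid displacement $\bm{a}+\bm{b}\wedge\bm{Ox}$ annihilated by the trace condition on $\Gamma_0$ --- so in effect you have reconstructed the proof the paper delegates to the literature.
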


Several proofs have been provided for this intricate inequality. Notably, see \cite{Friedrichs1947}, \cite{Gobert1962}, \cite{HlaNec1970a}, \cite{HlaNec1970b}, page 110 of \cite{DuvLio76}, and Section 6.3 of \cite{NecHla1981}. In \cite{Temam1983}, Korn's inequality is demonstrated in the space $W^{1,p}(\Omega)$ for $1 < p \le \infty$. An elementary proof can be found in \cite{Nit1981}, along with additional details in Appendix (A) of \cite{Miyoshi1985}.

We denote by $\gls{AA}:\bm{V}(\Omega) \to \bm{V}^\ast(\Omega)$ the linear and continuous operator defined in a way that:
\begin{equation}
	\label{opA}
	\langle\mathcal{A}\bm{w},\bm{v}\rangle_{\bm{V}^\ast(\Omega),\bm{V}(\Omega)}:=\int_{\Omega}A^{ijk\ell} e_{k\|\ell}(\bm{w}) e_{i\|j}(\bm{v}) \dd x,\quad\textup{ for all }\bm{w}, \bm{v} \in \bm{V}(\Omega).
\end{equation}

Thanks to the properties of the fourth order three-dimensional elasticity tensor $\{A^{ijk\ell}\}$ it results that the operator $\mathcal{A}$ is symmetric and uniformly positive-definite, in the sense that there exists a constant $\gls{Ce1}>0$ such that:
\begin{equation*}
	\sum_{i,j}|t_{ij}|^2 \le C_e^{(1)} A^{ijk\ell}(x) t_{k\ell}t_{ij},
\end{equation*}
for all $x\in\overline{\Omega}$ and all symmetric matrices $(t_{ij})$.

In a completely similar fashion, we denote by $\gls{BB}:\bm{V}(\Omega) \to \bm{V}^\ast(\Omega)$ the linear and continuous operator defined in a way that:
\begin{equation}
	\label{opB}
	\langle\mathcal{B}\bm{w},\bm{v}\rangle_{\bm{V}^\ast(\Omega),\bm{V}(\Omega)}:=\int_{\Omega}B^{ijk\ell} e_{k\|\ell}(\bm{w}) e_{i\|j}(\bm{v}) \dd x,\quad\textup{ for all }\bm{w}, \bm{v} \in \bm{V}(\Omega).
\end{equation}

Thanks to the properties of the fourth order three-dimensional viscosity tensor $\{B^{ijk\ell}\}$ it results that the operator $\mathcal{B}$ is symmetric and uniformly positive-definite (cf., e.g., \cite{HanSofonea2002}), in the sense that there exists a constant $\gls{Ce2}>0$ such that:
\begin{equation*}
	\sum_{i,j}|t_{ij}|^2 \le C_e^{(2)} B^{ijk\ell}(x) t_{k\ell}t_{ij},
\end{equation*}
for all $x\in\overline{\Omega}$ and all symmetric matrices $(t_{ij})$.

\section{Variational formulation of the time-dependent obstacle problem for a linearly viscoelastic body}
\label{Sec:2}

In this section, we formulate a specific obstacle problem for a linearly viscoelastic body subjected to remaining confined in a prescribed half-space. This condition requires that any admissible displacement vector field $v_i \bm{e}^i$ must ensure that all points of the corresponding deformed configuration remain within a half-space of the form
\begin{equation*}
\gls{H}:=\{x \in \mathbb{R}^3; \bm{Ox} \cdot \bm{q} \ge 0\},
\end{equation*}
where \gls{q} is a \emph{unit-norm vector} that is given once and for all. Let us denote by \gls{I} the identity mapping $\bm{I}:\overline{\Omega} \to \mathbb{E}^3$ and let us assume that the \emph{undeformed} reference configuration satisfies
\begin{equation}
	\label{eq:rest}
	\bm{I}(x) \cdot \bm{q} \ge 0,\quad \textup{ for all }x \in \overline{\Omega},
\end{equation}
or, in other words, the linearly viscoelastic body is located in the prescribed half-space when \emph{at rest and contact may occur. Note that~\eqref{eq:rest} does not imply that the body does not start its motion in contact with the obstacle. The latter property is, in fact, associated with the initial condition.}

The general confinement condition can be thus formulated by requiring that any \emph{admissible displacement vector field} must satisfy
\begin{equation*}
(\bm{I}(x) + v_i(x)\bm{e}^i) \cdot \bm{q} \ge 0,
\end{equation*}
for all $x \in \Gamma$ or, possibly, only for a.a. $x \in \Gamma$ when the covariant components $v_i$ are required to belong to the space $H^1(\Omega)$. The latter is a confinement condition of Signorini type (cf., e.g., \cite{KikuchiOden1988}) and means that the boundary of the linearly viscoelastic body under consideration cannot cross the prescribed half-space identified by its orthonormal complement $\bm{q}$. This condition is widely accepted to imply that all the other points in the deformed reference configuration of the linearly elastic body under consideration must not cross the prescribed half-space.

The subset \gls{UOmega} of admissible displacements is defined by:
\begin{equation*}
	\bm{U}(\Omega):=\{\bm{v}=(v^i) \in \bm{V}(\Omega); (\bm{I} + v_i\bm{e}^i) \cdot \bm{q} \ge 0\, \dd\Gamma\textup{-a.e. on }\Gamma\}.
\end{equation*}

The linearly viscoelastic body under consideration is subjected to applied body forces, that we denote by $\gls{f}=(f_i) \in L^2(0,T;\bm{L}^2(\Omega))$. The applied surface forces for the problem under consideration are expressed in terms of the vectors $\bm{e}^i$ of the Cartesian framework. For the sake of simplicity, we do not consider the action of tractions, even though it is licit to consider the action of traction forces on the portion of the boundary that does not engage contact with the obstacle. Further in the analysis, we will make additional assumptions on the applied body force $\bm{f}$, requiring that $\bm{f}\in H^1(0,T;\bm{L}^2(\Omega))$.

In the problem here discussed, we propose a different concept of solution that is more similar to the classical concept of solutions for variational inequalities~\cite{BrezisStampacchia1968,Glowinski1981,Lions1969}. In our formulation, the velocity of the linearly viscoelastic body needs not vanishing in the long run, as it seems to be postulated in~\cite{CGK24}.

The variational problem $\mathcal{P}(\Omega)$ indicated below constitutes the point of arrival of our analysis. The aim of this paper is to show that one such problem admits a solution enjoying the regularity announced below.

\begin{customprob}{$\mathcal{P}(\Omega)$}\label{problem0}
	Find $\bm{u}=(u_i):(0,T) \to \bm{V}(\Omega)$ such that
	\begin{align*}
		&\bm{u} \in L^\infty(0,T;\bm{U}(\Omega)),\\
		&\dot{\bm{u}} \in \mathcal{C}^0([0,T];\bm{L}^2(\Omega)) \cap L^2(0,T;\bm{V}(\Omega)),
	\end{align*}
	that satisfies the variational inequalities
	\begin{equation*}
		\begin{aligned}
			&2\rho\int_{\Omega}\dot{\bm{u}}(T)\cdot(\bm{v}(T)-\bm{u}(T))\dd x-2\rho\int_{\Omega}\bm{u}_1\cdot(\bm{v}(0)-\bm{u}_0)\dd x-2\rho\int_{0}^{T}\int_{\Omega}\dot{\bm{u}}(t)\cdot(\dot{\bm{v}}(t)-\dot{\bm{u}}(t))\dd x\dd t\\
			&\quad+\int_{0}^{T}\int_{\Omega}A^{ijk\ell} e_{k\|\ell}(\bm{u}(t)) e_{i\|j}(\bm{v}(t)-\bm{u}(t))\dd x\dd t
			+\int_{0}^{T} \int_{\Omega}B^{ijk\ell} e_{k\|\ell}(\dot{\bm{u}}(t)) e_{i\|j}(\bm{v}(t))\dd x\dd t\\
			&\quad-\dfrac{1}{2}\int_{\Omega}B^{ijk\ell} e_{k\|\ell}(\bm{u}(T)) e_{i\|j}(\bm{u}(T)) \dd x+\dfrac{1}{2}\int_{\Omega}B^{ijk\ell} e_{k\|\ell}(\bm{u}_0) e_{i\|j}(\bm{u}_0) \dd x\\
			&\ge\int_{0}^{T} \int_{\Omega} f^i(t) (v_i(t)-u_i(t)) \dd x\dd t,
		\end{aligned}
	\end{equation*}
	for all $\bm{v}=(v_i) \in L^\infty(0,T;\bm{U}(\Omega))$ such that $\dot{\bm{v}}\in L^\infty(0,T;\bm{L}^2(\Omega))\cap L^2(0,T;\bm{V}(\Omega))$, and that satisfies the initial conditions
	\begin{equation*}
		\label{IC}
		\begin{cases}
			\bm{u}(0)=\bm{u}_0,\\
			\dot{\bm{u}}(0)=\bm{u}_1,
		\end{cases}
	\end{equation*}
	where $\gls{u0}=(u_{i,0})\in\bm{U}(\Omega)$, and $\gls{u1}=(u_{i,1}) \in \bm{L}^2(\Omega)$ are prescribed.
	\bqed	
\end{customprob}

Let us note in passing that the initial condition $\bm{u}_0$ is required to belong to the set $\bm{U}(\Omega)$. This means that the linearly viscoelastic body under consideration \emph{could} make contact with the obstacle at the beginning of the observation. This feature could not be considered in~\cite{Pie2020,PWDT3D} due to the intrinsic limitations of the model.

\section{Existence of solutions for Problem~\ref{problem0}}
\label{Sec:3}

The formulation of this initial boundary value problem is inspired by the classical Signorini formulation (cf., e.g., \cite{KikuchiOden1988}).
We construct solutions of Problem~\ref{problem0} by studying its \emph{penalised} version first. In what follows, we denoted by $\gls{kappa}>0$ a real penalty parameter that is meant to approach zero.
In what follows, we denote by $\{f\}^{-}$ the \emph{negative part} of a function $f$, which is defined as $\gls{f-}:=-\min\{f,0\}$.

\begin{customprob}{$\mathcal{P}_\kappa(\Omega)$}\label{problem1}
	Find $\bm{u}_\kappa=(u_{i,\kappa}):(0,T) \to \bm{V}(\Omega)$ such that
	\begin{equation*}
		\begin{aligned}
			&\bm{u}_\kappa \in L^\infty(0,T;\bm{V}(\Omega)),\\
			&\dot{\bm{u}}_\kappa \in L^\infty(0,T;\bm{L}^2(\Omega)) \cap L^2(0,T;\bm{V}(\Omega)),\\
			&\ddot{\bm{u}}_\kappa \in L^\infty(0,T;\bm{V}^\ast(\Omega)),
		\end{aligned}
	\end{equation*}
	that satisfies the variational equations
	\begin{equation*}
		\begin{aligned}
			&2 \rho\, \int_{\Omega}\ddot{\bm{u}}_\kappa(t) \cdot \bm{v} \dd x
			+\int_{\Omega} A^{ijk\ell} e_{k\|\ell}(\bm{u}_\kappa(t)) e_{i\|j}(\bm{v}) \dd x
			+\int_{\Omega} B^{ijk\ell} e_{k\|\ell}(\dot{\bm{u}}_\kappa(t)) e_{i\|j}(\bm{v}) \dd x\\
			&\quad-\dfrac{1}{\kappa}\int_{\Gamma}\{[\bm{I}+u_{i,\kappa}(t)\bm{e}^i]\cdot\bm{q}\}^{-} v_i\bm{e}^i \cdot\bm{q} \dd\Gamma
			= \int_{\Omega} f^{i}(t)v_i \dd x,
		\end{aligned}
	\end{equation*}
	for all $\bm{v}=(v_i) \in \bm{V}(\Omega)$ in the sense of distributions in $(0,T)$, and that satisfies the initial conditions
	\begin{equation*}
		\label{ICkappa}
		\begin{cases}
			\bm{u}_\kappa(0)=\bm{u}_0,\\
			\dot{\bm{u}}_\kappa(0)=\bm{u}_1,
		\end{cases}
	\end{equation*}
	where $\bm{u}_0=(u_{i,0})\in\bm{U}(\Omega)$, and $\bm{u}_1=(u_{i,1}) \in \bm{L}^2(\Omega)$ are the same as in Problem~\ref{problem0}.
	\bqed	
\end{customprob}

Penalised problems akin to Problem~\ref{problem1} have been addressed in the literature by several authors; see for instance~\cite{BJ10,BJ11,BockJarSil2016}. In~\cite{BockJarSil2016}, the solution strategy exploits the underlying regularity of the model as well as the fact that  the model is posed over a two-dimensional Lipschitz domain. In~\cite{BJ10,BJ11}, the argument hinges on the fact that the competitors to the role of solutions are chosen among those whose weak derivative in time exists and is of class $L^2$. The competitors to the role of solutions for Problem~\ref{problem0} are instead only of class $\mathcal{C}^0$ in time.

In the case treated in this paper, for a.a. $t\in(0,T)$, the displacement solving Problem~\ref{problem0} is, in general, of class $\bm{H}^1(\Omega)$, where $\Omega$ is a Lipschitz domain in $\mathbb{R}^3$.
Let us recall the following general result which can be found in, e.g., Lemma~3.1 in~\cite{PieTem2023} or \cite{ABM14,EG15}.
\begin{lemma}
	\label{lem:1}
	Let $\mathscr{O} \subset \mathbb{R}^m$, with $m\ge 1$ an integer, be a non-empty open set. The operator $-\{\cdot\}^{-}:L^2(\mathscr{O}) \to L^2(\mathscr{O})$ defined by
	$$
	L^2(\mathscr{O})\ni f \mapsto -\{f\}^{-}:=\min\{f,0\} \in L^2(\mathscr{O}),
	$$
	is monotone, bounded and Lipschitz continuous with Lipschitz constant equal to $1$.
	\qed
\end{lemma}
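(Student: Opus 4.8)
**Proof plan for Lemma~\ref{lem:1} (the claim that $-\{\cdot\}^{-} = \min\{\cdot,0\}$ is monotone, bounded, and $1$-Lipschitz on $L^2(\mathscr{O})$).**

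The plan is to reduce every claim to the pointwise behaviour of the scalar function $g:\mathbb{R}\to\mathbb{R}$, $g(s):=\min\{s,0\}$, and then integrate. First I would record the elementary pointwise facts: $|g(s)|\le|s|$ for all $s\in\mathbb{R}$ (since $g(s)$ is either $0$ or $s$), and $|g(s)-g(s')|\le|s-s'|$ for all $s,s'\in\mathbb{R}$; the latter follows by checking the four cases according to the signs of $s$ and $s'$, or more slickly from the identity $\min\{s,0\}=\tfrac12(s-|s|)$ together with the triangle inequality for $|\cdot|$, which gives $|g(s)-g(s')|\le\tfrac12|s-s'|+\tfrac12\big||s|-|s'|\big|\le|s-s'|$. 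I would also note the monotonicity inequality at the scalar level, $(g(s)-g(s'))(s-s')\ge 0$, again an immediate case check (or a consequence of $g$ being nondecreasing).

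From these pointwise estimates the functional-analytic statements follow by integration over $\mathscr{O}$. For \emph{well-definedness and boundedness}: if $f\in L^2(\mathscr{O})$ then $|g\circ f|\le |f|$ a.e., hence $g\circ f\in L^2(\mathscr{O})$ with $\|g\circ f\|_{L^2(\mathscr{O})}\le\|f\|_{L^2(\mathscr{O})}$, so the operator maps $L^2(\mathscr{O})$ into itself and maps bounded sets to bounded sets. For \emph{Lipschitz continuity with constant $1$}: for $f_1,f_2\in L^2(\mathscr{O})$ we have $|g\circ f_1 - g\circ f_2|\le|f_1-f_2|$ a.e.; squaring and integrating yields $\|g\circ f_1 - g\circ f_2\|_{L^2(\mathscr{O})}\le\|f_1-f_2\|_{L^2(\mathscr{O})}$, and the constant cannot be improved (test with $f_1$ negative, $f_2=0$). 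For \emph{monotonicity}: from $(g(f_1(x))-g(f_2(x)))(f_1(x)-f_2(x))\ge 0$ a.e.\ one integrates to obtain
\begin{equation*}
\int_{\mathscr{O}} \big(g\circ f_1 - g\circ f_2\big)\,(f_1-f_2)\dd x \ge 0,
\end{equation*}
which is exactly the statement that the operator $f\mapsto -\{f\}^{-}$ is monotone as a map from $L^2(\mathscr{O})$ to its dual, identified with $L^2(\mathscr{O})$ via the inner product.

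There is no real obstacle here; the only point requiring a modicum of care is measurability of $g\circ f$, which is immediate since $g$ is continuous (indeed Lipschitz) and $f$ is measurable, so $g\circ f$ is measurable and the integrability follows from the domination $|g\circ f|\le|f|$. I would present the proof as: (i) the three scalar pointwise inequalities, proved in one or two lines by the identity $\min\{s,0\}=\tfrac12(s-|s|)$; (ii) integration to pass from pointwise to $L^2$ statements; (iii) a one-line remark that the Lipschitz constant $1$ is sharp. Since the result is quoted from \cite{PieTem2023,EG15}, a terse argument of this kind suffices.
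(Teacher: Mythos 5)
Your proof is correct, and it is the standard argument. Note that the paper does not actually prove this lemma: it is stated with a terminal \qed and a citation to Lemma~3.1 of \cite{PieTem2023} (see also \cite{EG15}), so there is no in-paper proof to compare against. The reduction to the scalar function $g(s)=\min\{s,0\}$ via the identity $\min\{s,0\}=\tfrac12(s-|s|)$, yielding $|g(s)-g(s')|\le|s-s'|$ and $(g(s)-g(s'))(s-s')\ge 0$, followed by integration over $\mathscr{O}$, is exactly the kind of terse argument one would expect those references to record; the remarks on measurability of $g\circ f$ and on sharpness of the Lipschitz constant are appropriate finishing touches.
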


We re-cast the penalty term, which describes the energy that has to be \emph{paid} by the system to violate the constraint, in terms of the non-linear operator $\gls{N}:\bm{V}(\Omega) \to \bm{V}^\ast(\Omega)$, that we define by $\mathcal{N}:=\left(\textup{tr}^\star\circ(-\{\cdot\}^{-})\circ\textup{tr}\right)$, where \gls{trstar} is the Banach adjoint of $\textup{tr}$. The operator $\mathcal{N}$ is such that
\begin{equation*}
	\langle \mathcal{N}\bm{w},\bm{v}\rangle_{\bm{V}^\ast(\Omega),\bm{V}(\Omega)}:=-\int_{\Gamma} \{[\bm{I}+w_i\bm{e}^i]\cdot\bm{q}\}^{-} v_i\bm{e}^i \cdot\bm{q} \dd\Gamma,
\end{equation*}
for all $\bm{w}=(w_i) \in \bm{V}(\Omega)$ and all $\bm{v}=(v_i)\in \bm{V}(\Omega)$. Observe that the latter definition makes sense being the trace of an element in $\bm{V}(\Omega)$ of class $\bm{L}^2(\Gamma)$ when $\Omega$ is a Lipschitz domain as in our case.
As a result, it is straightforward to observe that also the operator $\mathcal{N}$ is monotone, bounded and Lipschitz continuous.

We then define the non-linear operator $\gls{tildeN}:L^2(0,T;\bm{V}(\Omega)) \to L^2(0,T;\bm{V}^\ast(\Omega))$ pointwise by
\begin{equation}
	\label{Ntilde}
	(\tilde{\mathcal{N}}\bm{w})(t):={\mathcal{N}}(\bm{w}(t)),\quad\textup{ for a.a. } t \in (0,T),
\end{equation}
and we observe that, in light of the properties of the non-linear operator $\mathcal{N}$ introduced beforehand, the operator $\tilde{\mathcal{N}}$ is monotone, bounded and Lipschitz continuous.

We now recall the very important Gronwall's inequality often used to study evolutionary problems (see, e.g.  the original paper~\cite{GW} or, for instance, Appendix~B.2(j) in~\cite{Evans2010} or Theorem~1.1 in Chapter~III of~\cite{Har02}).
\begin{theorem}
	\label{Gronwall}
	Let $T>0$ and suppose that the function $y:[0,T]\to \mathbb R$ is absolutely continuous and such that
	$$
	\dot{y}(t) \le a(t) y(t)+b(t), \quad\textup{ for a.a. } t\in (0,T),
	$$
	where $a, b \in L^1(0,T)$ and $a(t), b(t) \ge 0$ for a.a. $t\in (0,T)$.
	Then, it results:
	$$
	y(t) \le \left[y(0)+\int_0^t b(s) \dd s\right]\exp\left(\int_0^t a(s) \dd s\right), \quad\textup{ for all } t\in [0,T].
	$$
	\qed
\end{theorem}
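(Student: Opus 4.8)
The plan is to run the classical integrating-factor argument, which reduces the differential inequality to a trivial integral inequality. First I would set $A(t):=\int_0^t a(s)\dd s$; this is well defined and absolutely continuous on $[0,T]$ since $a\in L^1(0,T)$, and it is nondecreasing with $A(t)\ge 0$ because $a\ge 0$ a.e., so that $0<e^{-A(t)}\le 1$ for every $t\in[0,T]$. Then I would introduce $v(t):=e^{-A(t)}y(t)$ and observe that $v$ is absolutely continuous on $[0,T]$, being the product of the two absolutely continuous functions $t\mapsto e^{-A(t)}$ and $t\mapsto y(t)$, both of which are bounded on the compact interval $[0,T]$; in particular $v(0)=y(0)$.

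Next I would differentiate. Since $A'=a$ a.e.\ and the chain and product rules hold almost everywhere for absolutely continuous functions on $[0,T]$, one gets
$$
\dfrac{\dd v}{\dd t}(t)=e^{-A(t)}\left(\dfrac{\dd y}{\dd t}(t)-a(t)\,y(t)\right)\le e^{-A(t)}\,b(t)\le b(t)\qquad\textup{for a.a. }t\in(0,T),
$$
where the first inequality uses the hypothesis on $\dot y$ together with $e^{-A(t)}>0$, and the second uses $e^{-A(t)}\le 1$ and $b(t)\ge 0$. Because $v$ is absolutely continuous, the fundamental theorem of calculus for Lebesgue integrals gives, for each $t\in[0,T]$,
$$
e^{-A(t)}y(t)-y(0)=v(t)-v(0)=\int_0^t \dfrac{\dd v}{\dd t}(s)\dd s\le\int_0^t b(s)\dd s,
$$
and multiplying through by $e^{A(t)}>0$ yields $y(t)\le\left[y(0)+\int_0^t b(s)\dd s\right]\exp\left(\int_0^t a(s)\dd s\right)$ for all $t\in[0,T]$, which is exactly the asserted estimate.

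The only point requiring care is the real-analysis bookkeeping: one must know that the product $e^{-A}y$ is again absolutely continuous — so that the fundamental theorem of calculus applies to $v$ — and that the chain and product rules for the derivative are valid a.e.\ in this class; this is precisely where the boundedness of $y$ and of $e^{-A}$ on the compact interval $[0,T]$ is used. Everything else is elementary. I therefore expect the proof in the paper to be essentially the one sketched above, or else a brief pointer to a standard reference such as \cite{Har02} (or the original \cite{GW}) with the details omitted.
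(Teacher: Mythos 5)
Your proof is correct and is the standard integrating-factor argument; in particular the inequality $e^{-A(t)}b(t)\le b(t)$, valid because $A\ge 0$ and $b\ge 0$, is exactly the relaxation needed to pass from the sharp estimate $y(t)\le y(0)e^{A(t)}+\int_0^t e^{A(t)-A(s)}b(s)\dd s$ to the slightly weaker form stated. The paper itself gives no proof: the theorem is closed with a $\qed$ and is quoted from the original paper \cite{GW} and from Theorem~1.1 in Chapter~III of \cite{Har02}, which is precisely the alternative you anticipated in your last sentence.
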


Let us also recall a classical result in the analysis of evolutionary equations: the Aubin-Lions-Simon theorem (cf., e.g., Theorem~8.62 in~\cite{Leoni2017}).

\begin{theorem}[Aubin-Lions-Simon]
	\label{th:ALS}
	Let $I\subset\mathbb{R}$ be an open and bounded interval. Let $(Y_0,\|\cdot\|_{Y_0})$, $(Y_1,\|\cdot\|_{Y_1})$ and $(Y_2,\|\cdot\|_{Y_2})$ be Banach spaces such that:
	\begin{equation*}
		Y_0\hookrightarrow\hookrightarrow Y_1 \hookrightarrow Y_2.
	\end{equation*}
	
	Let $1\le p <\infty$ and let $1\le q\le \infty$, and let $\mathcal{V}$ be the Banach space defined by $\mathcal{V}:=\{u\in L^p(I;Y_0);\dot{u}\in L^q(I;Y_2)\}$, and equipped with the norm:
	\begin{equation*}
		\|u\|_{\mathcal{V}}:=\|u\|_{L^p(I;Y_0)}+\|\dot{u}\|_{L^q(I;Y_2)},\quad\textup{ for all }u\in \mathcal{V}.
	\end{equation*}
	
	Then, the continuous embedding $\mathcal{V}\hookrightarrow L^p(I;Y_1)$ is compact. If $p=\infty$ and $q>1$ then the continuous embedding $\mathcal{V}\hookrightarrow \mathcal{C}^0(\overline{I};Y_1)$ is compact.
	\qed
\end{theorem}

The next step consists in showing, by Galerkin method, that Problem~\ref{problem1} admits at least one solution.
Prior to establishing this result, we need a lemma on the compactness of the time-dependent version of the trace operator.
We establish this lemma in a general setting.

\begin{lemma}
	\label{lem:tr}
	Let $\bm{\mathcal{V}}:=\{\bm{v} \in L^2(0,T;\bm{V}(\Omega)); \dot{\bm{v}} \in L^2(0,T;\bm{V}^\ast(\Omega))\}$.
	Let $\textup{tr}:\bm{V}(\Omega)\to\bm{L}^2(\Gamma)$ denote the classical trace operator.
	Then the operator $\tilde{\textup{tr}}:\bm{\mathcal{V}} \to L^2(0,T;\bm{L}^2(\Gamma))$, that constitutes the time-dependent version of $\textup{tr}$, and that is defined pointwise by
	\begin{equation*}
		(\tilde{\textup{tr}}\,\bm{v})(t):=\textup{tr}(\bm{v}(t)),\quad\textup{ for a.a. }t\in (0,T),
	\end{equation*}
	at each $\bm{v}\in \bm{\mathcal{V}}$ is linear, continuous and compact.
\end{lemma}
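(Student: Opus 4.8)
The plan is to combine the continuity of the (static) trace operator with the classical Aubin--Lions--Simon compactness lemma, exploiting a compact Sobolev embedding on the boundary. The key observation is that $\tilde{\textup{tr}}$ factors through a Bochner space built on a slightly better boundary space: for $\bm{v}\in H^1(0,T;\bm{V}(\Omega))$ the trace $\textup{tr}(\bm{v}(t))$ actually lives in $\bm{H}^{1/2}(\Gamma)$, not merely $\bm{L}^2(\Gamma)$, and the trace operator $\textup{tr}:\bm{V}(\Omega)\to\bm{H}^{1/2}(\Gamma)$ is bounded. Since $\Omega$ is a Lipschitz domain, the embedding $\bm{H}^{1/2}(\Gamma)\hookrightarrow\bm{L}^2(\Gamma)$ is compact (Rellich--Kondrachov on the $(2)$-dimensional Lipschitz manifold $\Gamma$). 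This is the structural input that makes compactness possible; without a gain of boundary regularity the bare map $\tilde{\textup{tr}}$ would only be continuous.

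First I would record linearity and continuity, which are immediate: for fixed $t$, $\textup{tr}$ is linear and bounded $\bm{V}(\Omega)\to\bm{L}^2(\Gamma)$, so $\|(\tilde{\textup{tr}}\,\bm{v})(t)\|_{\bm{L}^2(\Gamma)}\le C\|\bm{v}(t)\|_{\bm{V}(\Omega)}$ pointwise; integrating the square in $t$ gives $\|\tilde{\textup{tr}}\,\bm{v}\|_{L^2(0,T;\bm{L}^2(\Gamma))}\le C\|\bm{v}\|_{L^2(0,T;\bm{V}(\Omega))}\le C\|\bm{v}\|_{H^1(0,T;\bm{V}(\Omega))}$, and measurability of $t\mapsto\textup{tr}(\bm{v}(t))$ follows from continuity of $\textup{tr}$ composed with the strong measurability of $t\mapsto\bm{v}(t)$. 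Next I would set up the compactness. Consider a bounded sequence $(\bm{v}_n)$ in $H^1(0,T;\bm{V}(\Omega))$. By the Aubin--Lions--Simon lemma applied to the Gelfand-type triple $\bm{V}(\Omega)\hookrightarrow\bm{V}(\Omega)\hookrightarrow\bm{V}(\Omega)$ — more precisely, using that $(\bm{v}_n)$ is bounded in $L^2(0,T;\bm{V}(\Omega))$ and $(\dot{\bm{v}}_n)$ is bounded in $L^2(0,T;\bm{V}(\Omega))\subset L^2(0,T;\bm{V}^\ast(\Omega))$, together with the compact embedding $\textup{tr}:\bm{V}(\Omega)\to\bm{L}^2(\Gamma)$ obtained by composing the bounded map $\bm{V}(\Omega)\to\bm{H}^{1/2}(\Gamma)$ with the compact embedding $\bm{H}^{1/2}(\Gamma)\hookrightarrow\bm{L}^2(\Gamma)$ — one extracts a subsequence such that $\textup{tr}(\bm{v}_n)$ converges strongly in $L^2(0,T;\bm{L}^2(\Gamma))$. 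Concretely: the Aubin--Lions--Simon theorem states that if $X\hookrightarrow\hookrightarrow B\hookrightarrow Y$ with $X,Y$ Banach and the first embedding compact, then $\{w\in L^2(0,T;X):\dot w\in L^2(0,T;Y)\}$ embeds compactly into $L^2(0,T;B)$. Here I would take $X=\bm{V}(\Omega)$, $B=(\text{the image of }\textup{tr}\text{ in }\bm{L}^2(\Gamma))$, $Y=\bm{V}^\ast(\Omega)$; the compact embedding $X\hookrightarrow\hookrightarrow B$ is exactly the compactness of the static trace map just described.

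I expect the main obstacle to be handling the compactness of $\textup{tr}:\bm{V}(\Omega)\to\bm{L}^2(\Gamma)$ cleanly on a general Lipschitz domain and marrying it with the Aubin--Lions machinery, since the natural statement of Aubin--Lions uses a chain $X\hookrightarrow\hookrightarrow B\hookrightarrow Y$ of \emph{subspaces} of a common ambient space, whereas here $\textup{tr}$ is a genuine operator rather than an inclusion. The clean fix is to work with the graph-type space $\bm{W}:=\textup{tr}(\bm{V}(\Omega))\subset\bm{L}^2(\Gamma)$ equipped with the quotient norm (or simply to quote the operator version of Aubin--Lions--Simon, e.g. Simon's Corollary~4 in \emph{Compact sets in the space $L^p(0,T;B)$}, which is stated for operators). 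One then checks the two ingredients: boundedness of $\textup{tr}:\bm{V}(\Omega)\to\bm{L}^2(\Gamma)$ (trace theorem on Lipschitz domains) and the compactness coming from the intermediate space $\bm{H}^{1/2}(\Gamma)$. Once the strong convergence $\textup{tr}(\bm{v}_n)\to\textup{tr}(\bm{v})$ in $L^2(0,T;\bm{L}^2(\Gamma))$ is secured for an arbitrary bounded sequence, compactness of $\tilde{\textup{tr}}$ follows, completing the proof. I would close by noting that this is precisely the compactness property needed later to pass to the limit in the penalty term $\tilde{\mathcal N}(\bm{u}_\kappa)$ as the Galerkin/penalty parameters degenerate.
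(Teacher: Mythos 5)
Your underlying idea — that the static trace gains boundary regularity (it lands in $\bm{H}^{1/2}(\Gamma)$, which embeds compactly in $\bm{L}^2(\Gamma)$) and that this gain is what compactness must exploit — is sound. But the concrete Aubin--Lions triple you write down does not make sense, and the proposed ``fixes'' do not close the gap. Taking $X=\bm{V}(\Omega)$, $B=\textup{tr}(\bm{V}(\Omega))\subset\bm{L}^2(\Gamma)$, $Y=\bm{V}^\ast(\Omega)$ fails on several counts: $B$ consists of functions on $\Gamma$ and is not a subspace of $X$ (functions on $\Omega$), so $X\hookrightarrow B$ is not an inclusion; moreover the trace operator is far from injective, so it cannot be turned into an embedding even after renorming; and there is no meaningful inclusion $\bm{L}^2(\Gamma)\hookrightarrow\bm{V}^\ast(\Omega)$, so the chain $X\hookrightarrow B\hookrightarrow Y$ simply does not exist. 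Simon's Corollary~4 (and his Theorem~5) are stated for a nested chain of Banach spaces, not for composing a non-injective boundary operator into the middle slot, so quoting an ``operator version'' of that result does not rescue the argument as written.

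The correct way to carry out your strategy is to move the Aubin--Lions step to the boundary, \emph{after} applying the trace. Since $\textup{tr}:\bm{V}(\Omega)\to\bm{H}^{1/2}(\Gamma)$ is bounded and commutes with the weak time derivative, a bounded sequence $\bm{v}_n$ in $H^1(0,T;\bm{V}(\Omega))$ gives $\textup{tr}(\bm{v}_n)$ bounded in $L^2(0,T;\bm{H}^{1/2}(\Gamma))$ and $\frac{d}{dt}\textup{tr}(\bm{v}_n)=\textup{tr}(\dot{\bm{v}}_n)$ bounded in $L^2(0,T;\bm{L}^2(\Gamma))$. Now the chain $\bm{H}^{1/2}(\Gamma)\hookrightarrow\hookrightarrow\bm{L}^2(\Gamma)\hookrightarrow\bm{L}^2(\Gamma)$ is a genuine chain of inclusions with the first one compact, and Aubin--Lions--Simon gives relative compactness of $\{\textup{tr}(\bm{v}_n)\}$ in $L^2(0,T;\bm{L}^2(\Gamma))$, which is the assertion. (Alternatively, apply Aubin--Lions on the interior with $\bm{V}(\Omega)\hookrightarrow\hookrightarrow\bm{H}^s(\Omega)\hookrightarrow\bm{L}^2(\Omega)$ for some $1/2<s<1$, then compose with the bounded trace $\bm{H}^s(\Omega)\to\bm{L}^2(\Gamma)$.) Note also that this route is genuinely different from the paper's, which bypasses intermediate fractional spaces altogether by using a quantitative $\varepsilon$-trace inequality
\[
\|\textup{tr}\,w\|_{L^2(\Gamma)}^2\le\frac{c}{\varepsilon}\|w\|_{L^2(\Omega)}^2+c\,\varepsilon\|\nabla w\|_{L^2(\Omega)}^2,
\]
combined with Aubin--Lions in the interior (giving $\bm{v}_n\to\bm{v}$ in $\mathcal{C}^0([0,T];\bm{L}^2(\Omega))$) and then letting $\varepsilon\to 0$; that argument needs nothing about $\bm{H}^{1/2}(\Gamma)$ and in particular avoids discussing fractional Sobolev spaces on a merely Lipschitz boundary.
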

\begin{proof}
	The linearity and continuity properties are straightforward, and the proof closely follows the strategy for establishing the linearity and continuity of the time-dependent version of the linearised change of metric tensor (cf., e.g., page~4 in~\cite{Pie2019}).
	
	To establish the compactness, consider a sequence $\{\bm{v}_n\}_{n=1}^\infty$ that is bounded in $\bm{\mathcal{V}}$.  An application of the Aubin-Lions-Simon theorem (Theorem~\ref{th:ALS}) shows that $\bm{v}_n \to \bm{v}$ in $L^2(0,T;\bm{L}^2(\Omega))$ as $n\to\infty$ up to passing to a subsequence.
	Thanks to Theorem~18.1(iii) and Corollary~18.4 in~\cite{Leoni2017}, there exits two constants $c=c(\Omega)>0$ and $\epsilon=\epsilon(\Omega)>0$ which are independent of $n$ and $t$ such that:
	\begin{equation}
		\label{est:1}
		\int_{\Gamma} |\textup{tr}(\bm{v}_n(t))-\textup{tr}(\bm{v}(t))|^2 \dd\Gamma \le \dfrac{c}{\varepsilon}\int_{\Omega} |\bm{v}_n(t)-\bm{v}(t)|^2 \dd x
		+c\varepsilon\sum_{i=1}^3\int_{\Omega} |\nabla v_{i,n}(t)-\nabla v_i(t)|^2 \dd x,
	\end{equation}
	for all $0<\varepsilon<\epsilon$ and for a.a. $t\in (0,T)$. Integrating~\eqref{est:1} in $(0,T)$ gives:
	\begin{equation}
		\label{est:2}
		\begin{aligned}
			&\int_{0}^{T}\int_{\Gamma} |(\tilde{\textup{tr}}\,\bm{v}_n)(t)-(\tilde{\textup{tr}}\,\bm{v})(t)|^2 \dd\Gamma \dd t=\int_{0}^{T}\int_{\Gamma} |\textup{tr}(\bm{v}_n(t))-\textup{tr}(\bm{v}(t))|^2 \dd\Gamma \dd t\\
			&\le \dfrac{c}{\varepsilon}\int_{0}^{T}\int_{\Omega} |\bm{v}_n(t)-\bm{v}(t)|^2 \dd x \dd t+c\varepsilon\sum_{i=1}^3\int_{0}^{T}\int_{\Omega} |\nabla v_{i,n}(t)-\nabla v_i(t)|^2 \dd x \dd t.
		\end{aligned}
	\end{equation}
	
	Consequently, the first term on the right-hand side of the inequality in~\eqref{est:2} tends to zero as $n\to\infty$. 
	
	Additionally, thanks to the assumed boundedness for the sequence $\{\bm{v}_n\}_{n=1}^\infty$ in $L^2(0,T;\bm{V}(\Omega))$, the second term on the right-hand side of the inequality in~\eqref{est:2} is bounded independently of $n$.
	Therefore, there exists a constant $\tilde{C}>0$ independent of $t$ and $n$ such that:
	\begin{equation}
		\label{est:3}
		\limsup_{n\to\infty}\int_{0}^{T}\int_{\Gamma} |(\tilde{\textup{tr}}\,\bm{v}_n)(t)-(\tilde{\textup{tr}}\bm{v})(t)|^2 \dd\Gamma \dd t \le \tilde{C}\varepsilon.
	\end{equation}
	
	Thanks to the arbitrariness of $0<\varepsilon\le \epsilon$, we obtain that:
	\begin{equation*}
		\lim_{n\to\infty}\int_{0}^{T}\int_{\Gamma} |(\tilde{\textup{tr}}\,\bm{v}_n)(t)-(\tilde{\textup{tr}}\bm{v})(t)|^2 \dd\Gamma \dd t=0,
	\end{equation*}
	thus showing that $\tilde{\textup{tr}}\,\bm{v}_n \to \tilde{\textup{tr}}\,\bm{v}$ in $L^2(0,T;\bm{L}^2(\Gamma))$ as $n\to\infty$, and establishing the sought compactness. Note that the subsequence for which the pre-compactness of $\{\tilde{\textup{tr}}\,\bm{v}_n\}_{n=1}^\infty$ is realised is exactly the one for which the convergence $\bm{v}_n \to \bm{v}$ in $L^2(0,T;\bm{L}^2(\Omega))$ as $n\to\infty$ asserted by the Aubin-Lions-Simon theorem holds.
\end{proof}

Next, we establish the continuity of the operators $\mathcal{A}$ and $\mathcal{B}$.

\begin{lemma}
	\label{lem:2}
	The operator $\mathcal{A}:\bm{V}(\Omega) \to \bm{V}^\ast(\Omega)$ defined in~\eqref{opA} is linear and continuous. Moreover, also the corresponding time-dependent version $\gls{tildeA}:L^\infty(0,T;\bm{V}(\Omega)) \to L^\infty(0,T;\bm{V}^\ast(\Omega))$ defined pointwise by
	\begin{equation*}
		(\tilde{\mathcal{A}}\bm{w})(t):=\mathcal{A}(\bm{w}(t)),\quad\textup{ for all } \bm{w}\in L^\infty(0,T;\bm{V}(\Omega)),
	\end{equation*}
	for a.a. $t\in(0,T)$ is linear and continuous.
\end{lemma}
\begin{proof}
	The operator $\mathcal{A}$ is the classical operator in linearised elasticity; its linearity and boundedness are straightforward to establish (cf., e.g., \cite{Ciarlet1988}).
	For what concerns the time-dependent version of this operator, the linearity and boundedness follow, in the same spirit as in \cite{Pie2019}, from the uniform boundedness of the components $A^{ijk\ell}$ of the fourth order three-dimensional elasticity tensor.
\end{proof}

In a similar fashion, the following preparatory result can be established.

\begin{lemma}
	\label{lem:3}
	The operator $\mathcal{B}:\bm{V}(\Omega) \to \bm{V}^\ast(\Omega)$ defined in~\eqref{opB} is linear and continuous. Moreover, also the corresponding time-dependent version $\gls{tildeB}:L^2(0,T;\bm{V}(\Omega)) \to L^2(0,T;\bm{V}^\ast(\Omega))$ defined pointwise by
	\begin{equation*}
		(\tilde{\mathcal{B}}\bm{w})(t):=\mathcal{B}(\bm{w}(t)),\quad\textup{ for all } \bm{w}\in L^2(0,T;\bm{V}(\Omega)),
	\end{equation*}
	for a.a. $t\in(0,T)$ is linear and continuous.
	\qed
\end{lemma}

We are now ready to establish the existence of solutions for Problem~\ref{problem1} via Galerkin method. We note in passing that the presence of the non-linear term associated with the extent to which the constraint is broken renders the problem challenging. The proof of the next result hinges on the compactness of the time-dependent version of the trace operator established in Lemma~\ref{lem:tr}.

\begin{theorem}
	\label{th:1}
	Assume that $\bm{f}=(f^i) \in L^2(0,T;\bm{L}^2(\Omega))$.
	Then Problem~\ref{problem1} admits at least one solution.
\end{theorem}
\begin{proof}
	For the sake of clarity, we break the proof into three parts, numbered (i)--(iii).
	
	(i) \emph{Construction of a Galerkin approximation}.
	Since $\bm{V}(\Omega)$ is an infinite dimensional separable Hilbert space which is dense in $\bm{L}^2(\Omega)$ and compactly embedded in $\bm{L}^2(\Omega)$ in light of the Rellich-Kondra\v{s}ov theorem (cf., e.g., Theorem~8.4-3 in~\cite{Cia25}), we infer that there exists an orthogonal basis $\{\bm{w}^p\}_{p=1}^\infty$ of the space $\bm{V}(\Omega)$, whose elements also constitute a Hilbert basis of the space $\bm{L}^2(\Omega)$.
	
	The existence of such a basis is assured by the spectral theorem (Theorem~6.2-1 of~\cite{RT83}).
	For each positive integer $m\ge 1$, we denote by $\bm{E}^m$ the following $m$-dimensional linear hull:
	\begin{equation*}
		\bm{E}^m:=\textup{Span }\{\bm{w}^p\}_{p=1}^m \subset \bm{V}(\Omega) \subset \bm{L}^2(\Omega).
	\end{equation*}
	
	For each integer $1\le p\le m$, let $\lambda_p>0$ be defined in a way such that:
	\begin{equation*}
		\int_{\Omega}A^{ijk\ell}e_{k\|\ell}(\bm{w}^p)e_{i\|j}(\bm{v})\dd x=\lambda_p\int_{\Omega}(\bm{w}^p\cdot\bm{v})\dd x,\quad\textup{ for all }\bm{v}\in\bm{V}(\Omega).
	\end{equation*}
	
	Since each element of this Hilbert basis is independent of the variable $t$, we have that $\bm{w}^p \in L^\infty(0,T;\bm{V}(\Omega))$ for each integer $1 \le p \le m$. We now discretise Problem~\ref{problem1}.
	
	\begin{customprob}{$\mathcal{P}_\kappa^{m}(\Omega)$}\label{problem2}
		Find functions $c_{p,\kappa}:[0,T] \to \mathbb R$, $1 \le p \le m$, such that
		\begin{equation*}
			\bm{u}^m_\kappa(t):=\sum_{p=1}^{m} c_{p,\kappa}(t) \bm{w}^p, \quad \textup{ for a.a. } t \in (0,T),
		\end{equation*}
		and satisfying the variational equations
		\begin{equation*}
			\begin{aligned}
				&2 \rho \int_{\Omega}\ddot{\bm{u}}_\kappa^m(t) \cdot \bm{w}^p \dd x
				+\int_{\Omega} A^{ijk\ell} e_{k\|\ell}(\bm{u}_\kappa^m(t)) e_{i\|j}(\bm{w}^p) \dd x
				+\int_{\Omega} B^{ijk\ell} e_{k\|\ell}(\dot{\bm{u}}_\kappa^m(t)) e_{i\|j}(\bm{w}^p) \dd x\\
				&\quad-\dfrac{1}{\kappa}\int_{\Gamma}\{[\bm{I}+u_{i,\kappa}^m(t)\bm{e}^i]\cdot\bm{q}\}^{-} w_i^p\bm{e}^i \cdot\bm{q} \dd\Gamma
				= \int_{\Omega} f^{i}(t)w_i^p \dd x,
			\end{aligned}
		\end{equation*}
		for all $1 \le p \le m$ in the sense of distributions in $(0,T)$, and that satisfies the initial conditions
		\begin{equation*}
			\label{ICkappam}
			\begin{cases}
				\bm{u}_\kappa^{m}(0)=\bm{u}_0^m,\\
				\dot{\bm{u}}_\kappa^{m}(0)=\bm{u}_1^m,
			\end{cases}
		\end{equation*}
		where the initial conditions $\bm{u}_0^m$ and $\bm{u}_1^m$ are defined by:
		\begin{equation*}
			\begin{aligned}
				\bm{u}_0^m:&=\sum_{p=1}^m \left(\int_{\Omega}A^{ijk\ell}e_{k\|\ell}(\bm{u}_0)e_{i\|j}(\bm{w}^p)\dd x\right)\dfrac{\bm{w}^p}{\lambda_p},\\
				\bm{u}_1^m:&=\sum_{p=1}^m \int_{\Omega}(\bm{u}_1\cdot\bm{w}^p) \dd x\bm{w}^p.
			\end{aligned}
		\end{equation*}
		\bqed	
	\end{customprob}
	
	We observe (cf., e.g., Theorem~4.13-1 in~\cite{Cia25}) that $\bm{u}_0^m \to \bm{u}_0$ in $\bm{V}(\Omega)$ as $m\to\infty$, and that $\bm{u}_1^m \to \bm{u}_1$ in $\bm{L}^2(\Omega)$ as $m\to\infty$.
	Since the coefficients $c_{p,\kappa}$ and their derivatives only depend on the time variable, we can take them outside of the integral sign, getting a  $m \times m$ non-linear system of second order ordinary differential equations with respect to the variable $t$. Such a system can be rewritten in the form
	\begin{equation}
		\label{ODE}
		\begin{aligned}
			2\rho\ddot{\bm{C}}_\kappa(t)&=\left(-\int_{\Omega}A^{ijk\ell} e_{k\|\ell}(\bm{w}^r) e_{i\|j}(\bm{w}^p)\dd x\right)_{p,r=1}^m \bm{C}_\kappa(t)\\
			&\quad+\left(-\int_{\Omega}B^{ijk\ell} e_{k\|\ell}(\bm{w}^r) e_{i\|j}(\bm{w}^p)\dd x\right)_{p,r=1}^m \dot{\bm{C}}_\kappa(t)\\
			&\quad+ \dfrac{1}{\kappa} \left(\int_{\Gamma} \left(\{[\bm{I} +(\bm{C}_\kappa(t) \cdot (w^1_i\dots w^m_i))
			\bm{e}^{i}]\cdot \bm{q}\}^-\right) (w^p_{i} \bm{e}^{i} \cdot\bm{q}) \dd x\right)_{p=1}^m\\
			&\quad+\left(\int_{\Omega}f^i(t) w_i^p \dd x\right)_{p=1}^m,
		\end{aligned}
	\end{equation}
	where $\bm{C}_\kappa(t):=(c_{1,\kappa}(t) \dots c_{m,\kappa}(t))$.
	Thanks to Lemma~\ref{lem:1}, the right hand side of~\eqref{ODE} is Lipschitz continuous in $\mathbb{R}^m$ uniformly with respect to $t$, since it does not explicitly depend on $t$.
	An application of Theorem~1.45 of~\cite{Rou13} gives that for each integer $m \ge 1$ there exists a unique global solution $\bm{u}_\kappa^m$ to Problem~\ref{problem2}, defined a.e. over the interval $(0,T)$, such that:
	\begin{equation*}
		\begin{aligned}
			&\bm{u}^m_\kappa \in L^\infty(0,T;\bm{E}^m),\\
			&\dot{\bm{u}}^m_\kappa \in L^\infty(0,T;\bm{E}^m),\\
			&\ddot{\bm{u}}^m_\kappa \in L^2(0,T;\bm{E}^m).
		\end{aligned}
	\end{equation*}
	
	(ii) \emph{Energy estimates for the approximate solutions}. Let us multiply the variational equations in Problem~\ref{problem2} by $\dot{c}_{p,\kappa}(t)$, with $0<t<T$, and sum with respect to $p\in\{1, \dots,m\}$. The penalised variational equations in Problem~\ref{problem2} take the form
	\begin{equation}
		\label{pm1}
		\begin{aligned}
			&\rho \dfrac{\dd}{\dd t} \int_{\Omega}\dot{u}_{i,\kappa}^m(t) \dot{u}_{i,\kappa}^m(t) \dd x+\dfrac{1}{2} \dfrac{\dd}{\dd t} \int_{\Omega} A^{ijk\ell} e_{k\|\ell}(\bm{u}^m_\kappa(t))  e_{i\|j}(\bm{u}^m_\kappa(t)) \dd x
			+\int_{\Omega} B^{ijk\ell} e_{k\|\ell}(\dot{\bm{u}}^m_\kappa(t)) e_{i\|j}(\dot{\bm{u}}^m_\kappa(t)) \dd x\\
			&\quad+\dfrac{1}{2\kappa}\dfrac{\dd}{\dd t}\left(\int_{\Gamma} \left(\left\{[\bm{I}+u^m_{i,\kappa}(t) \bm{e}^i]\cdot \bm{q}\right\}^{-}\right)^2 \dd\Gamma\right)
			=\int_{\Omega} f^{i}(t) \dot{u}^{m}_{i,\kappa}(t) \dd x,
		\end{aligned}
	\end{equation}
	and are valid in the sense of distributions in $(0,T)$. 
	Observe that the differentiation of the negative part is obtained as a result of the same computational steps as in Stampacchia's theorem (cf., e.g., Theorem~4.4 on page~153 of~\cite{EG15}), together with an application of Theorem~8.28 of~\cite{Leoni2017}.
	An integration over the interval $(0,t)$, where $0<t\le T$, changes \eqref{pm1} into:
	\begin{equation}
		\label{pm1-2}
		\begin{aligned}
			&\rho \int_{\Omega} \dot{u}_{i,\kappa}^m(t) \dot{u}_{i,\kappa}^m(t) \dd x
			+\dfrac{1}{2} \int_{\Omega} A^{ijk\ell} e_{k\|\ell}(\bm{u}^m_\kappa(t)) e_{i\|j}(\bm{u}^m_\kappa(t))\dd x\\
			&\quad+\int_{0}^{t} \int_{\Omega} B^{ijk\ell} e_{k\|\ell}(\dot{\bm{u}}^m_\kappa(\tau)) e_{i\|j}(\dot{\bm{u}}^m_\kappa(\tau))\dd x \dd\tau+\dfrac{1}{2\kappa} \int_{\Gamma} \left(\left\{[\bm{I}+u^{m}_{i,\kappa}(t) \bm{e}^{i}]\cdot \bm{q}\right\}^{-}\right)^2 \dd\Gamma\\
			&=\rho \int_{\Omega}u_{i,1}^m u_{i,1}^m \dd x
			+\dfrac{1}{2}\int_{\Omega} A^{ijk\ell} e_{k\|\ell}(\bm{u}_{0}^m) e_{i\|j}(\bm{u}_{0}^m) \dd x
			+\dfrac{1}{2\kappa} \int_{\Gamma} \left(\left\{[\bm{I}+u^{m}_{i,0} \bm{e}^{i}]\cdot \bm{q}\right\}^{-}\right)^2 \dd\Gamma\\
			&\quad+\int_{0}^{t}\int_{\Omega} f^{i}(\tau) \dot{u}^m_{i,\kappa}(\tau) \dd x \dd\tau.
		\end{aligned}
	\end{equation}
	
	Let us observe that the third integral in the right-hand side of~\eqref{pm1-2} tends to zero, since $\bm{u}_0^m \to \bm{u}_0$ in $\bm{V}(\Omega)$ as $m\to\infty$, and $\bm{u}_0 \in \bm{U}(\Omega)$ by assumption.
	An application of the Cauchy-Schwarz inequality gives:
	\begin{equation}
		\label{pm1-2-3}
		\begin{aligned}
			&\int_{0}^{t}\int_{\Omega} f^{i}(\tau) \dot{u}^m_{i,\kappa}(\tau) \dd x \dd\tau
			\le\left(\int_0^{T} \|\bm{f}(t)\|_{\bm{L}^2(\Omega)}^2 \dd t\right)^{1/2} \left(\int_{0}^{t}\|\dot{\bm{u}}^m_\kappa(\tau)\|_{\bm{L}^2(\Omega)}^2\dd\tau\right)^{1/2}\\ 
			&\le \dfrac{c_0^2 C_e^{(2)}}{2}\int_0^{T} \|\bm{f}(t)\|_{\bm{L}^2(\Omega)}^2 \dd t +\dfrac{1}{2c_0^2C_e^{(2)}}\int_{0}^{t}\|\dot{\bm{u}}^m_\kappa(\tau)\|_{\bm{L}^2(\Omega)}^2 \dd\tau.
		\end{aligned}
	\end{equation}
	
	By the uniform positive-definiteness of the elasticity tensors $\{A^{ijk\ell}\}_{i,j,k,\ell}$ and $\{B^{ijk\ell}\}_{i,j,k,\ell}$, Korn's inequality (Theorem~\ref{KornCart}), \eqref{pm1-2}, and~\eqref{pm1-2-3}, the following estimate holds for $m$ sufficiently large:
	\begin{equation}
		\label{pm2}
		\begin{aligned}
			&\rho\|\dot{\bm{u}}^m_\kappa(t)\|_{\bm{L}^2(\Omega)}^2 + \dfrac{\|\bm{u}^m_\kappa(t)\|_{\bm{V}(\Omega)}^2}{2c_0^2 C_e^{(1)}}+\dfrac{1}{2c_0^2 C_e^{(2)}}\int_{0}^{t}\|\dot{\bm{u}}^m_\kappa(\tau)\|_{\bm{V}(\Omega)}^2 \dd \tau+\dfrac{1}{2\kappa}\left\|\left\{[\bm{I}+u^m_{i,\kappa}(t) \bm{e}^{i}]\cdot \bm{q}\right\}^{-}\right\|_{L^2(\Gamma)}^2\\ 
			&\le \rho\|\bm{u}_{1}^m\|_{\bm{L}^2(\Omega)}^2 +\dfrac{c_0^2 C_e^{(2)}}{2}\|\bm{f}\|_{L^2(0,T;\bm{L}^2(\Omega))}^2+\dfrac{1}{2}\int_{\Omega}A^{ijk\ell} e_{k\|\ell}(\bm{u}_{0}^m) e_{i\|j}(\bm{u}_{0}^m) \dd x\\
			&\quad+\rho\int_{0}^{t}\|\dot{\bm{u}}^m_\kappa(\tau)\|_{\bm{L}^2(\Omega)}^2 \dd\tau + \dfrac{1}{2c_0^2 C_e^{(1)}}\int_{0}^{t}\|\bm{u}^m_\kappa(\tau)\|_{\bm{V}(\Omega)}^2\dd\tau
			+\dfrac{1}{2\kappa}\int_{0}^{t}\left\|\left\{[\bm{I}+u^{m}_{i,\kappa}(\tau) \bm{e}^{i}]\cdot \bm{q}\right\}^{-}\right\|_{L^2(\Gamma)}^2 \dd\tau.
		\end{aligned}
	\end{equation}
	
	An application of the Gronwall's inequality (Theorem~\ref{Gronwall}) with 
	\begin{equation*}
		y(t):=\rho\int_{0}^{t}\|\dot{\bm{u}}^m_\kappa(\tau)\|_{\bm{L}^2(\Omega)}^2 \dd\tau + \dfrac{1}{2c_0^2 C_e^{(1)}}\int_{0}^{t}\|\bm{u}^m_\kappa(\tau)\|_{\bm{V}(\Omega)}^2\dd\tau+\dfrac{1}{2\kappa}\int_{0}^{t} \left\|\left\{[\bm{I}+u^{m}_{i,\kappa}(\tau) \bm{e}^{i}]\cdot \bm{q}\right\}^{-}\right\|_{L^2(\Gamma)}^2 \dd\tau,
	\end{equation*}
	$a \equiv 1 >0$ and
	$$
	b \equiv\max\left\{\dfrac{1}{2},\rho,\dfrac{c_0^2 C_e^{(2)}}{2}\right\}\left(\|\bm{u}_{1}^m\|_{\bm{L}^2(\Omega)}^2 +\|\bm{f}\|_{L^2(0,T;\bm{L}^2(\Omega))}^2+\int_{\Omega}A^{ijk\ell} e_{k\|\ell}(\bm{u}_{0}^m) e_{i\|j}(\bm{u}_{0}^m) \dd x\right) \ge 0,
	$$
	gives the following upper bound
	\begin{equation}
		\label{pm2-2}
		\begin{aligned}
			&\rho\int_{0}^{t}\|\dot{\bm{u}}^m_\kappa(\tau)\|_{\bm{L}^2(\Omega)}^2 \dd\tau + \dfrac{1}{2c_0 C_e^{(1)}}\int_{0}^{t}\|\bm{u}^m_\kappa(\tau)\|_{\bm{V}(\Omega)}^2\dd\tau
			+\dfrac{1}{2\kappa}\int_{0}^{t}\left\|\left\{[\bm{I}+u^{m}_{i,\kappa}(\tau) \bm{e}^{i}]\cdot \bm{q}\right\}^{-}\right\|_{L^2(\Gamma)}^2 \dd\tau\\
			&\le t \max\left\{\dfrac{1}{2},\rho,\dfrac{c_0 C_e^{(2)}}{2}\right\}\left(\|\bm{u}_{1}^m\|_{\bm{L}^2(\Omega)}^2 +\|\bm{f}\|_{L^2(0,T;\bm{L}^2(\Omega))}^2+\int_{\Omega}A^{ijk\ell} e_{k\|\ell}(\bm{u}_{0}^m) e_{i\|j}(\bm{u}_{0}^m) \dd x\right)e^{t},
		\end{aligned}
	\end{equation}
	which can be easily made independent of $t$. Observe that the previous right hand side is bounded independently of $m$ in light of Theorem~4.13-1 in~\cite{Cia25}. Therefore, combining~\eqref{pm2} with~\eqref{pm2-2} gives
	\begin{equation}
		\label{pm5}
		\begin{aligned}
			\{\bm{u}^m_\kappa\}_{m=1}^\infty &\textup{ is bounded in }L^\infty(0,T;\bm{V}(\Omega)) \textup{ independently of }m \textup{ and }\kappa,\\
			\{\dot{\bm{u}}^m_\kappa\}_{m=1}^\infty &\textup{ is bounded in }L^\infty(0,T;\bm{L}^2(\Omega)) \cap L^2(0,T;\bm{V}(\Omega)) \textup{ independently of }m \textup{ and }\kappa,
		\end{aligned}
	\end{equation}
	and, moreover, by~\eqref{pm2-2} and Gronwall's inequality (cf., e.g., \cite{GW} or Appendix~B.2(j) in~\cite{Evans2010}), it results:
	\begin{equation}
		\label{pm6}
		\left\|\left\{[\bm{I}+u^{m}_{i,\kappa} \bm{e}^{i}]\cdot \bm{q}\right\}^{-}\right\|_{L^2(0,T;L^2(\Gamma))} \le \sqrt{\kappa T b e^T}.
	\end{equation}	
	
	Since the following direct sum decomposition holds
	$$
	\bm{V}(\Omega)=\bm{E}^m \oplus (\bm{E}^m)^\perp,
	$$
	we get that for any $\bm{v} \in \bm{V}(\Omega)$ with $\|\bm{v}\|_{\bm{V}(\Omega)} = 1$, and a.a. $t \in (0,T)$, the variational equations in Problem~\ref{problem2} give
	\begin{align*}
		&2\rho|\langle\ddot{\bm{u}}^m_\kappa(t) , \bm{v} \rangle_{\bm{V}^\ast(\Omega),\bm{V}(\Omega)}|\le \|\bm{f}(t)\|_{\bm{L}^2(\Omega)}+ \left(\max_{i,j,k,\ell}\|A^{ijk\ell}\|_{\mathcal{C}^0(\overline{\Omega})}\right)\|\bm{u}^m_\kappa(t)\|_{\bm{V}(\Omega)}\\ &\quad+\left(\max_{i,j,k,\ell}\|B^{ijk\ell}\|_{\mathcal{C}^0(\overline{\Omega})}\right)\|\dot{\bm{u}}^m_\kappa(t)\|_{\bm{V}(\Omega)}
		+\dfrac{1}{\kappa}\left\|\left\{[\bm{I}+u^{m}_{i,\kappa}(t) \bm{e}^{i}]\cdot \bm{q}\right\}^{-}\bm{q}\right\|_{\bm{L}^2(\Gamma)},
	\end{align*}
	and, by~\eqref{pm5} and~\eqref{pm6}, we thus infer that there exists a constant $C>0$, independent of $m$, $t$ and $\kappa$, such that:
	\begin{equation}
		\label{pm7}
		\|\ddot{\bm{u}}^m_\kappa\|_{L^2(0,T;\bm{V}^\ast(\Omega))} \le \dfrac{C}{2\rho}\left(1+\dfrac{1}{\sqrt{\kappa}}\right).
	\end{equation}
	
	(iii) \emph{Passage to the limit as $m\to\infty$ and completion of the proof}. By~\eqref{pm5}, \eqref{pm6}, and~\eqref{pm7} we can infer that there exist subsequences, still denoted $\{\bm{u}^m\}_{m=1}^\infty$, $\{\dot{\bm{u}}^m_\kappa\}_{m=1}^\infty$ and $\{\ddot{\bm{u}}^m_\kappa\}_{m=1}^\infty$ such that the following convergences hold:
	\begin{equation}
		\label{c1}
		\begin{aligned}
			\bm{u}^m_\kappa \wsc \bm{u}_\kappa,\quad&\textup{ in }L^\infty(0,T;\bm{V}(\Omega)) \textup{ as }m \to\infty,\\
			\dot{\bm{u}}^m_\kappa \wsc \dot{\bm{u}}_\kappa,\quad&\textup{ in }L^\infty(0,T;\bm{L}^2(\Omega)) \textup{ as }m \to\infty,\\
			\dot{\bm{u}}^m_\kappa \rightharpoonup \dot{\bm{u}}_\kappa,\quad&\textup{ in }L^2(0,T;\bm{V}(\Omega)) \textup{ as }m \to\infty,\\
			\ddot{\bm{u}}^m_\kappa\rightharpoonup\ddot{\bm{u}}_\kappa,\quad&\textup{ in }L^2(0,T;\bm{V}^\ast(\Omega)) \textup{ as }m \to\infty,\\
			\kappa^{-1}\left\{[\bm{I}+u^{m}_{i,\kappa} \bm{e}^{i}]\cdot \bm{q}\right\}^{-} \rightharpoonup \chi_\kappa ,\quad&\textup{ in }L^2(0,T;L^2(\Gamma)) \textup{ as }m \to\infty.
		\end{aligned}
	\end{equation}
	
	By the Sobolev embedding theorem (cf., e.g., Theorem~10.1.20 of~\cite{YP}), we obtain
	\begin{equation}
		\label{unif}
		\begin{aligned}
			&\bm{u}^m_\kappa \rightharpoonup \bm{u}_\kappa, \quad\textup{ in }\mathcal{C}^0([0,T];\bm{V}(\Omega)) \textup{ as } m \to \infty,\\
			&\dot{\bm{u}}^m_\kappa \rightharpoonup \dot{\bm{u}}_\kappa, \quad\textup{ in }\mathcal{C}^0([0,T];\bm{V}^\ast(\Omega)) \textup{ as } m \to \infty,
		\end{aligned}
	\end{equation}
	
	An application of Theorem~8.28 of~\cite{Leoni2017} to the fifth convergence of the process~\eqref{c1} gives:
	\begin{equation}
		\label{penaltym}
		\kappa^{-1}\left\{[\bm{I}+u^{m}_{i,\kappa} \bm{e}^{i}]\cdot \bm{q}\right\}^{-} \rightharpoonup \chi_\kappa ,\quad\textup{ in }L^2((0,T)\times\Gamma) \textup{ as }m \to\infty.
	\end{equation}
	
	Combining~\eqref{penaltym} with Lemma~\ref{lem:tr} gives:
	\begin{equation}
		\label{monot-1}
		\begin{aligned}
			&\dfrac{1}{\kappa}\int_{0}^{T} \langle \mathcal{N}\bm{u}^m_\kappa(t),\bm{u}^m_\kappa(t)\rangle_{\bm{V}^\ast(\Omega),\bm{V}(\Omega)}\dd t\\
			&=-\dfrac{1}{\kappa}\int_{0}^{T} \int_{\Gamma} \{[\bm{I}+u^m_{i,\kappa}(t)\bm{e}^i]\cdot\bm{q}\}^{-} (\bm{I}+u^m_{i,\kappa}\bm{e}^i)\cdot\bm{q} \dd\Gamma\dd t\\
			&\quad+\dfrac{1}{\kappa}\int_{0}^{T}\int_{\Gamma}\{[\bm{I}+u^m_{i,\kappa}(t)\bm{e}^i]\cdot\bm{q}\}^{-} (\bm{I}\cdot\bm{q})\dd\Gamma\dd t\\
			&\to -\int_{0}^{T} \int_{\Gamma} \chi_\kappa(t) (u_{i,\kappa}(t)\bm{e}^i\cdot\bm{q}) \dd\Gamma\dd t,\quad\textup{ as }m\to\infty.
		\end{aligned}
	\end{equation}
	
	Thanks to Lemma~\ref{lem:1}, the first convergence of~\eqref{unif} and the weak convergence~\eqref{penaltym}, Theorem~8.28 of~\cite{Leoni2017}, Theorem~8.62 of~\cite{Leoni2017} and the monotonicity of the operator $\tilde{\mathcal{N}}$ defined in~\eqref{Ntilde} and~\eqref{monot-1}, we are in a position to apply Theorem~12.5-2 of~\cite{Cia25} and, so, to obtain:
	\begin{equation}
		\label{penalty}
		\chi_\kappa=\kappa^{-1}\left\{[\bm{I}+u_{i,\kappa} \bm{e}^{i}]\cdot \bm{q}\right\}^{-} \in L^2((0,T)\times\Gamma).
	\end{equation}	
	
	We now verify that $\bm{u}_\kappa$ is a solution of the variational equations in Problem~\ref{problem2}. Let $\psi \in \mathcal D(0,T)$ and let $\tilde{m} \ge 1$ be any integer. For each $m \ge \tilde{m}$, we have
	\begin{equation}
		\label{density}
		\begin{aligned}
			&2\rho \int_{0}^{T} \int_{\Omega} \ddot{u}_{i,\kappa}^m(t) v_i\dd x \psi(t) \dd t+\int_{0}^{T} \int_{\Omega} A^{ijk\ell} e_{k\|\ell}(\bm{u}^m_\kappa(t)) e_{i\|j}(\bm{v}) \dd x \psi(t) \dd t\\
			&\quad+\int_{0}^{T} \int_{\Omega} B^{ijk\ell} e_{k\|\ell}(\dot{\bm{u}}^m_\kappa(t)) e_{i\|j}(\bm{v}) \dd x \psi(t) \dd t\\
			&\quad-\dfrac{1}{\kappa}\int_{0}^{T} \int_{\Gamma} \left(\left\{[\bm{I}+u^{m}_{i,\kappa}(t)\bm{e}^{i}]\cdot \bm{q}\right\}^{-}\right)  (v_{i}\bm{e}^{i} \cdot \bm{q}) \dd \Gamma\psi(t)\dd t\\
			&=\int_{0}^{T} \int_{\Omega} f^{i}(t) v_{i} \dd x \psi(t) \dd t,
		\end{aligned}
	\end{equation}
	for all $\bm{v} \in \bm{E}^{\tilde{m}} $. Thanks to Lemma~\ref{lem:1}, the convergence process~\eqref{c1}, \eqref{penalty}, the arbitrariness of $\psi\in \mathcal{D}(0,T)$, the fact that 
	\begin{equation*}
		\overline{\bigcup_{\tilde{m} \ge 1}\bm{E}^{\tilde{m}}}^{\|\cdot\|_{\bm{V}(\Omega)}} = \bm{V}(\Omega),
	\end{equation*}
	and the continuity of the operators $\mathcal{A}$ (Lemma~\ref{lem:2}) and $\mathcal{B}$ (Lemma~\ref{lem:3}), we obtain that a passage to the limit as $m \to \infty$ in~\eqref{density} gives that $\bm{u}_\kappa$ is a solution to the penalised variational equations in Problem~\ref{problem1}.
	
	The last property that we have to check is the validity of the initial conditions for $\bm{u}_\kappa$.
	Let us introduce the operator $\bm{L}_0:\mathcal{C}^0([0,T];\bm{V}(\Omega)) \to \bm{V}(\Omega)$ defined in a way such that $\bm{L}_0(\bm{v}):=\bm{v}(0)$. Such an operator $\bm{L}_0$ turns out to be linear and continuous and, therefore, by the first convergence of~\eqref{unif}, we immediately obtain:
	$$
	\bm{u}_\kappa(0)=\bm{u}_0, \quad\textup{ in }\bm{V}(\Omega).
	$$
	
	Similarly, let us introduce the operator $\bm{L}_1: \mathcal{C}^0([0,T];\bm{V}^\ast(\Omega)) \to \bm{V}^\ast(\Omega)$ defined in a way such that $\bm{L}_1(\bm{v}):=\bm{v}(0)$. Such an operator $\bm{L}_1$ turns out to be linear and continuous and since $\bm{u}^m_1$ is the projection of $\bm{u}_1$ onto $\bm{E}^m$, the second convergence of~\eqref{unif} gives:
	$$
	\bm{u}^m_1 \rightharpoonup \dot{\bm{u}}_\kappa(0)=\bm{u}_1, \quad\textup{ in }\bm{V}^\ast(\Omega) \textup{ as }m\to\infty.
	$$
	
	We thus obtain that $\bm{u}_\kappa$ is a solution of Problem~\ref{problem2}, and the proof is complete.
\end{proof}

The next step consists in showing that the family of solutions $\{\bm{u}_\kappa\}_{\kappa>0}$ of Problem~\ref{problem1} admits a subsequence $\{\bm{u}_{\kappa_n}\}_{n=1}^\infty$ (where $\{\kappa_n\}_{n=1}^\infty$ denotes a sequence of positive real numbers that converges to zero as $n\to\infty$) that converges in some suitable sense to a solution of Problem~\ref{problem0}.

Thanks to the energy estimates recovered in the proof of Theorem~\ref{th:1}, we can extract further compactness, as asserted in the following lemma.

\begin{lemma}
	\label{lem:4}
	Consider the sequence $\{\bm{u}_\kappa\}_{\kappa>0}$, where each $\bm{u}_\kappa$ is a solution of Problem~\ref{problem1}.
	Up to passing to a subsequence $\{\bm{u}_{\kappa_n}\}_{n=1}^\infty$ where $\{\kappa_n\}_{n=1}^\infty$ is such that $\kappa_n\to 0^+$ as $n\to\infty$, the following convergences hold:
	\begin{equation}
		\label{convproc1}
		\begin{aligned}
			\bm{u}_{\kappa_n} &\wsc \bm{u},\quad\textup{ in }L^\infty(0,T;\bm{V}(\Omega)) \textup{ as }n\to\infty,\\
			\dot{\bm{u}}_{\kappa_n} &\wsc \dot{\bm{u}},\quad\textup{ in }L^\infty(0,T;\bm{L}^2(\Omega)) \textup{ as }n\to\infty,\\
			\dot{\bm{u}}_{\kappa_n} &\rightharpoonup\dot{\bm{u}},\quad\textup{ in }L^2(0,T;\bm{V}(\Omega)) \textup{ as }n\to\infty,\\
			\{[\bm{I}+u_{i,\kappa}\bm{e}^i]\cdot\bm{q}\}^{-} &\to \{[\bm{I}+u_i\bm{e}^i]\cdot\bm{q}\}^{-}=0,\quad\textup{ in }L^2(0,T;\bm{L}^2(\Gamma)) \textup{ as }n\to\infty,\\
			\bm{u}_{\kappa_n} &\rightharpoonup \bm{u},\quad\textup{ in }\mathcal{C}^0([0,T];\bm{V}(\Omega)) \textup{ as }n\to\infty.
		\end{aligned}
	\end{equation}
	
	In particular, we obtain that $\bm{u} \in\mathcal{C}^0([0,T];\bm{U}(\Omega))$ and $\bm{u}(0) =\bm{u}_0 \in \bm{U}(\Omega)$.
\end{lemma}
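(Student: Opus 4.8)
The plan is to obtain the convergences in \eqref{convproc1} by extracting weak-$*$ (respectively weak) limits from the uniform bounds established in the proof of the previous theorem, and then to upgrade these to the strong convergence of the penalty terms and the membership $\bm{u}\in\mathcal{C}^0([0,T];\bm{U}(\Omega))$. First I would recall that the energy estimate \eqref{pm2-2} was derived with constants independent of both $m$ and $\kappa$; passing to the limit as $m\to\infty$ (using weak-$*$ lower semicontinuity of the norms) shows that $\{\bm{u}_\kappa\}_{\kappa>0}$ is bounded in $L^\infty(0,T;\bm{V}(\Omega))$ and $\{\dot{\bm{u}}_\kappa\}_{\kappa>0}$ is bounded in $L^\infty(0,T;\bm{L}^2(\Omega))\cap L^2(0,T;\bm{V}(\Omega))$, uniformly in $\kappa$. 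By the Banach-Alaoglu theorem (in the separable predual setting, so that weak-$*$ sequential compactness applies) I extract a subsequence realising the first three convergences in \eqref{convproc1}, with $\dot{\bm{u}}$ being the weak derivative of $\bm{u}$ by closedness of the distributional-derivative operator under weak convergence.

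The fourth convergence is where the penalty parameter does its work. From \eqref{pm6}, after passing to the limit $m\to\infty$ and using weak lower semicontinuity of the $L^2(0,T;L^2(\Gamma))$ norm together with \eqref{penalty}, one has $\|\{[\bm{I}+u_{i,\kappa}\bm{e}^i]\cdot\bm{q}\}^{-}\|_{L^2(0,T;L^2(\Gamma))}\le\sqrt{\kappa\,T\,b\,e^T}\to 0$ as $\kappa\to 0$. Hence $\{[\bm{I}+u_{i,\kappa}\bm{e}^i]\cdot\bm{q}\}^{-}\to 0$ strongly in $L^2(0,T;\bm{L}^2(\Gamma))$. To identify this with $\{[\bm{I}+u_i\bm{e}^i]\cdot\bm{q}\}^{-}$ I need the trace of $\bm{u}_\kappa$ to converge strongly: for this I invoke the Aubin-Lions-Simon theorem (Theorem~8.62 in~\cite{Leoni2017}), using that $\bm{u}_\kappa$ is bounded in $L^2(0,T;\bm{V}(\Omega))$ (indeed in $H^1(0,T;\cdot)$ via the velocity bound) with $\dot{\bm{u}}_\kappa$ bounded, to get $\bm{u}_\kappa\to\bm{u}$ strongly in, say, $L^2(0,T;\bm{L}^2(\Omega))$ — but more to the point, combining the velocity bound in $L^2(0,T;\bm{V}(\Omega))$ with Lemma~\ref{lem:tr} (compactness of $\tilde{\textup{tr}}$) yields $\tilde{\textup{tr}}\,\bm{u}_\kappa\to\tilde{\textup{tr}}\,\bm{u}$ strongly in $L^2(0,T;\bm{L}^2(\Gamma))$ along a subsequence. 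Then the Lipschitz continuity of $-\{\cdot\}^{-}$ (Lemma~\ref{lem:1}) passes the strong trace convergence through the negative-part nonlinearity, so the strong $L^2$-limit of $\{[\bm{I}+u_{i,\kappa}\bm{e}^i]\cdot\bm{q}\}^{-}$ equals $\{[\bm{I}+u_i\bm{e}^i]\cdot\bm{q}\}^{-}$, which must therefore vanish a.e. on $(0,T)\times\Gamma$; that is exactly the statement $[\bm{I}+u_i\bm{e}^i]\cdot\bm{q}\ge 0$ a.e., i.e.\ $\bm{u}(t)\in\bm{U}(\Omega)$ for a.a.\ $t$.

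For the fifth convergence, I use the uniform $H^1(0,T;\bm{V}(\Omega))$-type bound — more precisely the bounds on $\bm{u}_\kappa$ in $L^\infty(0,T;\bm{V}(\Omega))$ and on $\dot{\bm{u}}_\kappa$ in $L^2(0,T;\bm{V}(\Omega))$, hence $\bm{u}_\kappa$ bounded in $H^1(0,T;\bm{V}(\Omega))\hookrightarrow\mathcal{C}^0([0,T];\bm{V}(\Omega))$ — together with the Sobolev/Rellich-type embedding argument (Aubin-Lions-Simon, Theorem~8.62 in~\cite{Leoni2017}, or the argument on page~4 of~\cite{Pie2019} mirroring \eqref{unif}) to extract $\bm{u}_\kappa\rightharpoonup\bm{u}$ in $\mathcal{C}^0([0,T];\bm{V}(\Omega))$. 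Finally, to conclude $\bm{u}\in\mathcal{C}^0([0,T];\bm{U}(\Omega))$: from the strong trace convergence one can pass, for each fixed $t$ (along a subsequence, using continuity of $t\mapsto\bm{u}_\kappa(t)$ in $\bm{V}(\Omega)$ and the weak convergence in $\mathcal{C}^0$), to $[\bm{I}+u_i(t)\bm{e}^i]\cdot\bm{q}\ge 0$ a.e.\ on $\Gamma$ for \emph{every} $t\in[0,T]$ — the constraint set $\bm{U}(\Omega)$ being weakly closed in $\bm{V}(\Omega)$ (convex and strongly closed, since the trace is continuous), and $\bm{u}\in\mathcal{C}^0([0,T];\bm{V}(\Omega))$ with $\bm{u}(t)$ a weak limit of elements of $\bm{U}(\Omega)$ forces $\bm{u}(t)\in\bm{U}(\Omega)$ for all $t$. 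Evaluating the fifth convergence at $t=0$, and using that the operator $\bm{v}\mapsto\bm{v}(0)$ is continuous on $\mathcal{C}^0([0,T];\bm{V}(\Omega))$ together with $\bm{u}_\kappa(0)=\bm{u}_0$, yields $\bm{u}(0)=\bm{u}_0\in\bm{U}(\Omega)$. I expect the main obstacle to be the rigorous identification of the penalty limit — specifically making sure the \emph{same} subsequence simultaneously delivers the weak-$*$ limits, the Aubin-Lions-Simon strong convergence, and the compactness from Lemma~\ref{lem:tr}, so that the Lipschitz nonlinearity $-\{\cdot\}^{-}$ can be passed to the limit and the weakly-closed constraint set argument applies at every $t$ rather than merely almost every $t$.
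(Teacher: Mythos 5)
Your plan correctly extracts the first three weak and weak-$*$ convergences from the uniform bounds \eqref{pm2-2} and \eqref{pm6}, and correctly observes that the rate $\sqrt{\kappa\, T\, b\, e^{T}}$ forces $\{[\bm{I}+u_{i,\kappa}\bm{e}^{i}]\cdot\bm{q}\}^{-}\to 0$ strongly in $L^{2}(0,T;L^{2}(\Gamma))$. For the identification of the limit of the penalty residual, you take a genuinely different route from the paper. The paper shows that $\int_{0}^{T}\langle\mathcal{N}\bm{u}_{\kappa}(t),\bm{u}_{\kappa}(t)\rangle_{\bm{V}^{\ast}(\Omega),\bm{V}(\Omega)}\,\dd t\to 0$ and then invokes the monotonicity and Lipschitz continuity of $\tilde{\mathcal{N}}$ together with Theorem~9.13-2 of~\cite{PGCLNFAA} (a Minty-type monotone-operator argument) to conclude that $\{[\bm{I}+u_{i}\bm{e}^{i}]\cdot\bm{q}\}^{-}=0$. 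You instead note that $\{\bm{u}_{\kappa}\}$ is bounded in $H^{1}(0,T;\bm{V}(\Omega))$, apply Lemma~\ref{lem:tr} to obtain strong convergence of traces in $L^{2}(0,T;\bm{L}^{2}(\Gamma))$, and then push this strong convergence through the Lipschitz nonlinearity $-\{\cdot\}^{-}$ (Lemma~\ref{lem:1}) to identify the limit of $\{[\bm{I}+u_{i,\kappa}\bm{e}^{i}]\cdot\bm{q}\}^{-}$ directly with $\{[\bm{I}+u_{i}\bm{e}^{i}]\cdot\bm{q}\}^{-}$, which must then equal the already-known zero limit. This is more elementary, avoids the monotone-operator machinery, and in fact exploits the paper's own Lemma~\ref{lem:tr} more fully (the paper invokes that lemma only in the Galerkin passage $m\to\infty$, where the penalty term does not vanish and Minty's trick is genuinely needed). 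Both approaches are sound here, but yours is cleaner precisely because in the limit $\kappa\to0$ the penalty residual is already known to vanish strongly.

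One step in your final paragraph is incorrect as written and needs repair. You argue that ``$\bm{u}(t)$ is a weak limit of elements of $\bm{U}(\Omega)$'' and then invoke the weak closedness of $\bm{U}(\Omega)$. But the penalised solutions $\bm{u}_{\kappa}(t)$ are in general \emph{not} in $\bm{U}(\Omega)$; the whole point of the penalty term in Problem~\ref{problem1} is that the constraint may be violated at positive $\kappa$, and indeed $\{[\bm{I}+u_{i,\kappa}\bm{e}^{i}]\cdot\bm{q}\}^{-}$ is nonzero in general. The weak-closedness argument therefore has nothing to act on. The correct argument — for which you have already assembled all the ingredients — is: the fourth convergence shows $\{[\bm{I}+u_{i}\bm{e}^{i}]\cdot\bm{q}\}^{-}=0$ a.e.\ on $(0,T)\times\Gamma$, hence $\bm{u}(t)\in\bm{U}(\Omega)$ for a.a.\ $t$; since $\bm{u}\in\mathcal{C}^{0}([0,T];\bm{V}(\Omega))$ by the fifth convergence, and $\bm{U}(\Omega)$ is convex and strongly closed in $\bm{V}(\Omega)$ (closedness follows from continuity of the trace), the set of $t$ with $\bm{u}(t)\in\bm{U}(\Omega)$ is dense in $[0,T]$ and continuity plus closedness upgrade the a.e.\ membership to all $t\in[0,T]$. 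Your identification of $\bm{u}(0)=\bm{u}_{0}$ via the weak continuity of the evaluation map $\bm{v}\mapsto\bm{v}(0)$ on $\mathcal{C}^{0}([0,T];\bm{V}(\Omega))$ is fine.
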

\begin{proof}
	Testing the variational equations of Problem~\ref{problem1} at $\dot{\bm{u}}_\kappa(t)$ for a.a. $t\in (0,T)$, and proceeding similarly to item~(ii) in the proof of Theorem~\ref{th:1} we obtain energy estimates similar to~\eqref{pm2-2}, so that:
			\begin{equation}
				\label{pm5-2}
				\begin{aligned}
					\{\bm{u}_\kappa\}_{\kappa>0}&\textup{ is bounded in }L^\infty(0,T;\bm{V}(\Omega)) \textup{ independently of }\kappa,\\
					\{\dot{\bm{u}}_\kappa\}_{\kappa>0}&\textup{ is bounded in }L^\infty(0,T;\bm{L}^2(\Omega)) \cap L^2(0,T;\bm{V}(\Omega)) \textup{ independently of }\kappa,\\
					&\left\|\left\{[\bm{I}+u_{i,\kappa} \bm{e}^{i}]\cdot \bm{q}\right\}^{-}\right\|_{L^2(0,T;L^2(\Gamma))}\le \sqrt{\kappa T b e^T}.
				\end{aligned}
			\end{equation}
	
Therefore, the following convergences hold up to passing to a subsequence indexed over $\kappa_n$, where $\{\kappa_n\}_{n=1}^\infty$ denotes a sequence of positive real numbers such that $\kappa_n\to 0^+$ as $n\to\infty$:
\begin{equation*}
	\begin{aligned}
		\bm{u}_{\kappa_n} &\wsc \bm{u},\quad\textup{ in }L^\infty(0,T;\bm{V}(\Omega)) \textup{ as }n\to\infty,\\
		\dot{\bm{u}}_{\kappa_n} &\wsc \dot{\bm{u}},\quad\textup{ in }L^\infty(0,T;\bm{L}^2(\Omega)) \textup{ as }n\to\infty,\\
		\dot{\bm{u}}_{\kappa_n} &\rightharpoonup\dot{\bm{u}},\quad\textup{ in }L^2(0,T;\bm{V}(\Omega)) \textup{ as }n\to\infty.
	\end{aligned}
\end{equation*}
	
The convergence $\bm{u}_{\kappa_n}\rightharpoonup\bm{u}$ in $\mathcal{C}^0([0,T];\bm{V}(\Omega))$ as $n\to\infty$ holds thanks to Theorem~10.1.20~in~\cite{YP} (see also~\cite{GasPap2006}). The convergence $\bm{u}_{\kappa_n}\to\bm{u}$ in $\mathcal{C}^0([0,T];\bm{L}^2(\Omega))$ as $n\to\infty$ holds thanks to the Aubin-Lions-Simon theorem (Theorem~\ref{th:ALS}). Let us now observe that an application of~\eqref{pm6} gives
\begin{equation*}
	\begin{aligned}
		&-\int_{0}^{T} \int_{\Gamma} \{[\bm{I}+u_{i,\kappa_n}(t)\bm{e}^i]\cdot\bm{q}\}^{-} u_{i,\kappa_n}(t)\bm{e}^i\cdot\bm{q} \dd \Gamma\dd t\\
		&=-\int_{0}^{T} \int_{\Gamma} \{[\bm{I}+u_{i,\kappa_n}(t)\bm{e}^i]\cdot\bm{q}\}^{-} \left([\bm{I}+u_{i,\kappa_n}(t)\bm{e}^i]\cdot\bm{q}\right) \dd \Gamma\dd t+\int_{0}^{T} \int_{\Gamma} \{[\bm{I}+u_{i,\kappa_n}(t)\bm{e}^i]\cdot\bm{q}\}^{-} (\bm{I}\cdot\bm{q}) \dd\Gamma\dd t\\
		&=-\int_{0}^{T} \int_{\Gamma} \{[\bm{I}+u_{i,\kappa_n}(t)\bm{e}^i]\cdot\bm{q}\}^{-} \{[\bm{I}+u_{i,\kappa_n}(t)\bm{e}^i]\cdot\bm{q}\}^{+} \dd \Gamma\dd t\\
		&\quad+\int_{0}^{T} \int_{\Gamma} \{[\bm{I}+u_{i,\kappa_n}(t)\bm{e}^i]\cdot\bm{q}\}^{-} \{[\bm{I}+u_{i,\kappa_n}(t)\bm{e}^i]\cdot\bm{q}\}^{-} \dd \Gamma\dd t
		+\int_{0}^{T} \int_{\Gamma} \{[\bm{I}+u_{i,\kappa_n}(t)\bm{e}^i]\cdot\bm{q}\}^{-} (\bm{I}\cdot\bm{q}) \dd\Gamma\dd t\\
		&=\int_{0}^{T} \int_{\Gamma} \left|\{[\bm{I}+u_{i,\kappa_n}(t)\bm{e}^i]\cdot\bm{q}\}^{-}\right|^2 \dd \Gamma\dd t+\int_{0}^{T} \int_{\Gamma} \{[\bm{I}+u_{i,\kappa_n}(t)\bm{e}^i]\cdot\bm{q}\}^{-} (\bm{I}\cdot\bm{q}) \dd\Gamma\dd t \to 0,
	\end{aligned}
\end{equation*}
as $n\to\infty$. The Lipschitz continuity and monotonicity of the operator $-\{\cdot\}^{-}$ (Lemma~\ref{lem:1}) put us in position to apply the corollary to the Minty-Browder theorem (Theorem~12.5-2 in~\cite{Cia25}) so as to infer that
\begin{equation*}
	\{[\bm{I}+u_i\bm{e}^i]\cdot\bm{q}\}^{-} = 0,\quad\textup{ in }L^2(0,T;L^2(\Gamma)),
\end{equation*}
as well as the validity of the fourth convergence in~\eqref{convproc1}. In particular, the convergence $\bm{u}_{\kappa_n}\rightharpoonup\bm{u}$ in $\mathcal{C}^0([0,T];\bm{V}(\Omega))$ as $n\to\infty$ obtained beforehand as a result of Theorem~10.1.20 of~\cite{YP} implies that $\bm{u}\in\mathcal{C}^0([0,T];\bm{U}(\Omega))$ and that $\bm{u}(0)=\bm{u}_0$. This completes the proof.
\end{proof}

The next lemma aims to verify that the limit $\bm{u}$ introduced in Lemma~\ref{lem:4} satisfies the initial condition for the velocity.
Due to the limited regularity of the limit $\ddot{\bm{u}}$, we need to restrict our analysis to applied body forces $\bm{f}=(f^i)\in H^1(0,T;\bm{L}^2(\Omega))$.

\begin{lemma}
	\label{lem:5}
	Assume that $\bm{f}=(f^i)\in H^1(0,T;\bm{L}^2(\Omega))$. Then the limit $\dot{\bm{u}}\in L^\infty(0,T;\bm{L}^2(\Omega))\cap\mathcal{C}^0([0,T];\bm{H}^{-1}(\Omega))$ and is such that $\dot{\bm{u}}(0)=\bm{u}_1$ and $\dot{\bm{u}}(T)\in \bm{L}^2(\Omega)$.
	Besides, up to passing to a suitable subsequence $\{\dot{\bm{u}}_{\kappa_n}\}_{n=1}^\infty$, where $\{\kappa_n\}_{n=1}^\infty$ is a sequence such that $\kappa_n\to 0^+$ as $n\to\infty$, it results that:
	\begin{equation*}
		\begin{aligned}
			\dot{\bm{u}}_{\kappa_n}\to\dot{\bm{u}}&,\quad\textup{ in }L^2(0,T;\bm{L}^2(\Omega)) \textup{ as }n\to\infty,\\
			\dot{\bm{u}}_{\kappa_n}(T)\rightharpoonup\dot{\bm{u}}(T)&,\quad\textup{ in }\bm{L}^2(\Omega)\textup{ as }n\to\infty.
		\end{aligned}
	\end{equation*}
\end{lemma}
\begin{proof}
	For a.a. $t\in(0,T)$, and for all $\kappa>0$, any solution $\bm{u}_\kappa$ for Problem~\ref{problem1} satisfies the following equation:
	\begin{equation}
		\label{important-1}
		2\rho\ddot{\bm{u}}_\kappa(t)+(\tilde{\mathcal{A}}\bm{u}_\kappa)(t)+(\tilde{\mathcal{B}}\dot{\bm{u}}_\kappa)(t)+\dfrac{1}{\kappa}(\tilde{\mathcal{N}}\bm{u}_\kappa)(t)=\bm{f}(t),\quad\textup{ in }\bm{V}^\ast(\Omega).
	\end{equation}
	
	Testing~\eqref{important-1} along any element $\bm{v} \in \bm{H}^1_0(\Omega)$, we obtain that the penalty term vanishes. Therefore, the function $\bm{u}_\kappa$ satisfies the following equation
	\begin{equation}
		\label{important-2}
		\ddot{\bm{u}}_\kappa(t)+(\tilde{\mathcal{A}}\bm{u}_\kappa)(t)+(\tilde{\mathcal{B}}\dot{\bm{u}}_\kappa)(t)=\bm{f}(t),\quad\textup{ in }\bm{H}^{-1}(\Omega),
	\end{equation}
	in the sense of distributions in $(0,T)$. Note that~\eqref{important-2} is a linear equation; this implies that
	\begin{equation*}
		\{\ddot{\bm{u}}_\kappa\}_{\kappa>0} \textup{ is bounded in } L^2(0,T;\bm{H}^{-1}(\Omega)) \textup{ independently of }\kappa,
	\end{equation*}
	so that, up to passing to a suitable subsequence $\{\ddot{\bm{u}}_{\kappa_n}\}_{n=1}^\infty$ where $\{\kappa_n\}_{n=1}^\infty$ is a sequence such that $\kappa_n\to 0^+$ as $n\to\infty$ we obtain:
	\begin{equation*}
		\ddot{\bm{u}}_{\kappa_n}\rightharpoonup\ddot{\bm{u}},\quad\textup{ in }L^2(0,T;\bm{H}^{-1}(\Omega))\textup{ as }n\to\infty.
	\end{equation*}
	
	Therefore, it is immediate to observe that $\dot{\bm{u}}_{\kappa_n}\rightharpoonup\dot{\bm{u}}$ in $\mathcal{C}^0([0,T];\bm{H}^{-1}(\Omega))$ as $n\to\infty$ and that:
	\begin{equation}
		\label{eq:u'T}
		\dot{\bm{u}}_{\kappa_n}(T)\rightharpoonup\dot{\bm{u}}(T),\quad\textup{ in }\bm{H}^{-1}(\Omega)\textup{ as }n\to\infty.
	\end{equation}
	
	The latter allows us to show that $\dot{\bm{u}}(0)=\bm{u}_1$ and $\dot{\bm{u}}(T)\in\bm{H}^{-1}(\Omega)$.
	Additionally, an application of the the Aubin-Lions-Simon theorem (Theorem~\ref{th:ALS}) gives:
	\begin{equation*}
		\dot{\bm{u}}_{\kappa_n}\to \dot{\bm{u}},\quad\textup{ in }L^2(0,T;\bm{L}^2(\Omega)) \textup{ as }n\to\infty.
	\end{equation*}
	
	For each $\kappa>0$ and for a.a. $0\le t\le T$, define the energy:
	\begin{equation}
		\label{Ek}
		\begin{aligned}
		&E_\kappa(t):=\rho\int_{\Omega}|\dot{\bm{u}}_\kappa(t)|^2\dd x+\dfrac{1}{2}\int_{\Omega}A^{ijk\ell}e_{k\|\ell}(\bm{u}_\kappa(t))e_{i\|j}(\bm{u}_\kappa(t))\dd x\\
		&\quad+\dfrac{1}{2\kappa}\int_{\Gamma}|\{[\bm{I}+u_{i,\kappa}(t)\bm{e}^i]\cdot\bm{q}\}^{-}|^2\dd\Gamma-\int_{\Omega}f^i(t)u_{i,\kappa}(t)\dd x.
		\end{aligned}
	\end{equation}
	
	Observe that:
	\begin{equation}
		\label{EkL1}
		\begin{aligned}
			&\int_{0}^{T}|E_\kappa(t)|\dd t\le\rho\|\dot{\bm{u}}_\kappa(t)\|_{\bm{L}^2(\Omega)}^2+\dfrac{\max_{i,j,k,\ell}\|A^{ijk\ell}\|_{\mathcal{C}^0(\overline{\Omega})}c_0^2}{2}\int_{0}^{T}\|\bm{u}_\kappa(t)\|_{\bm{V}(\Omega)}^2\dd t\\
			&\quad+\dfrac{1}{2\kappa}\int_{\Gamma}\left|\{[\bm{I}+u_{i,\kappa}(t)\bm{e}^i]\cdot\bm{q}\}^{-}\right|^2\dd\Gamma+\dfrac{1}{2}\int_{0}^{T}\left\{\sum_{i=1}^3\|f^i(t)\|_{L^2(\Omega)}^2\right\}\dd t\\
			&\quad+\dfrac{1}{2}\int_{0}^{T}\left\{\sum_{i=1}^3\|u_{i,\kappa}(t)\|_{L^2(\Omega)}^2\right\}\dd t.
		\end{aligned}
	\end{equation}
	
	Since the right-hand side of~\eqref{EkL1} is bounded independently of $\kappa$ thanks to~\eqref{pm5-2}, we infer that $E_\kappa\in L^1(0,T)$, and that the family $\{E_\kappa\}_{\kappa>0}$ is bounded in $L^1(0,T)$ independently of $\kappa$. Observe that, thanks to the regularity of $\bm{u}_\kappa$ and Corollary~10.1.26 of~\cite{YP}, it results that $E_\kappa$ is differentiable a.e. in $(0,T)$.
	
	Differentiating $E_\kappa$ and invoking the variational equations of Problem~\ref{problem1} tested at $\bm{v}=\dot{\bm{u}}_\kappa(t)$ give:
	\begin{equation}
		\label{eq:dEkappa0}
		\begin{aligned}
			&\dot{E}_\kappa(t)=\rho\dfrac{\dd}{\dd t}\left(\int_{\Omega}|\dot{\bm{u}}_\kappa(t)|^2\dd x\right)+\dfrac{1}{2}\dfrac{\dd}{\dd t}\left(\int_{\Omega}A^{ijk\ell}e_{k\|\ell}(\bm{u}_\kappa(t))e_{i\|j}(\bm{u}_\kappa(t))\dd x\right)\\
			&\quad+\dfrac{1}{2\kappa}\int_{\Gamma}\left|\{[\bm{I}+u_{i,\kappa}(t)\bm{e}^i]\cdot\bm{q}\}^{-}\right|^2\dd\Gamma-\int_{\Omega}f^i(t)\dot{u}_{i,\kappa}(t)\dd x-\int_{\Omega}\dot{f}^i(t)u_{i,\kappa}(t)\dd x\\
			&=-\int_{\Omega}B^{ijk\ell}e_{k\|\ell}(\dot{\bm{u}}_\kappa(t))e_{i\|j}(\dot{\bm{u}}_\kappa(t))\dd x-\int_{\Omega}\dot{f}^i(t)u_{i,\kappa}(t)\dd x.
		\end{aligned}
	\end{equation}
	
	Taking the absolute value in~\eqref{eq:dEkappa0} and employing Young's inequality~\cite{Young1912} gives:
	\begin{equation}
		\label{eq:dEkappa1}
		|\dot{E}_\kappa(t)|\le\left|\int_{\Omega}B^{ijk\ell}e_{k\|\ell}(\dot{\bm{u}}_\kappa(t))e_{i\|j}(\dot{\bm{u}}_\kappa(t))\dd x\right|+\dfrac{1}{2}\sum_{i=1}^3\|\dot{f}^i(t)\|_{L^2(\Omega)}^2+\dfrac{1}{2}\sum_{i=1}^3\|u_{i,\kappa}(t)\|_{L^2(\Omega)}^2.
	\end{equation}
	
	Observe that, thanks to~\eqref{pm5-2}, the right-hand side of~\eqref{eq:dEkappa1} is bounded independently of $\kappa$ showing that the family $\{\dot{E}_\kappa\}_{\kappa>0}$ is bounded in $L^1(0,T)$ independently of $\kappa$, and it thus results that $\dot{E}_\kappa\in L^1(0,T)$. Therefore, we obtain that $E_\kappa\in W^{1,1}(0,T)\subset \gls{AC}$.
	The continuity of $E_\kappa$ in $[0,T]$ and the continuity of the second, third and fourth term on the right-hand side of~\eqref{Ek} implies that the mapping
	\begin{equation}
		\label{eq:map-0}
		[0,T]\ni t\mapsto \int_{\Omega}|\dot{\bm{u}}_\kappa(t)|^2\dd x,
	\end{equation}
	is continuous and, thanks to~\eqref{convproc1}, is bounded in $[0,T]$ independently of $\kappa$. The boundedness of the mapping in~\eqref{eq:map-0} in $[0,T]$ independently of $\kappa$ is equivalent to stating that there exists a constant $C>0$ independent of $\kappa$ such that
	\begin{equation*}
		\max_{t\in [0,T]}\|\dot{\bm{u}}_\kappa(t)\|_{\bm{L}^2(\Omega)} \le C,\quad\textup{ for all }\kappa>0,
	\end{equation*}
	and, in particular, that $\{\dot{\bm{u}}_\kappa(T)\}_{\kappa>0}$ is bounded in $\bm{L}^2(\Omega)$ independently of $\kappa$. Therefore, up to passing to a suitable subsequence $\{\dot{\bm{u}}_{\kappa_n}(T)\}_{n=1}^\infty$ where $\{\kappa_n\}_{n=1}^\infty$ is a sequence such that $\kappa_n\to 0^+$ as $n\to\infty$ we obtain:
	\begin{equation*}
		\dot{\bm{u}}_{\kappa_n}(T)\rightharpoonup\bm{U}_T,\quad\textup{ in }\bm{L}^2(\Omega)\textup{ as }n\to\infty.
	\end{equation*}
	
	Since we have already observed in~\eqref{eq:u'T} that $\dot{\bm{u}}_{\kappa_n}(T)\rightharpoonup\dot{\bm{u}}(T)$ in $\bm{H}^{-1}(\Omega)$ as $n\to\infty$, the uniqueness of the weak limit gives that $\dot{\bm{u}}(T)=\bm{U}_T\in\bm{L}^2(\Omega)$.
\end{proof}

We are now ready to establish the main result of this paper: the existence of solutions for Problem~\ref{problem0}.

\begin{theorem}
	\label{exP0}
	Assume that $\bm{f}\in H^1(0,T;\bm{L}^2(\Omega))$. Then Problem~\ref{problem0} admits at least one solution.
\end{theorem}
\begin{proof}
	Fix $\bm{v}=(v_i) \in L^\infty(0,T;\bm{U}(\Omega))$ such that $\dot{\bm{v}}\in L^\infty(0,T;\bm{L}^2(\Omega))\cap L^2(0,T;\bm{V}(\Omega))$. For any given $t\in[0,T]$, let us test equation~\eqref{important-1} at the element $(\bm{v}(t)-\bm{u}_\kappa(t)) \in \bm{V}(\Omega)$.
	Thanks to the convergence $\dot{\bm{u}}_{\kappa_n}(T)\rightharpoonup\dot{\bm{u}}(T)$ in $\bm{L}^2(\Omega)$ as $n\to\infty$ established in Lemma~\ref{lem:5}, and thanks to the convergence $\dot{\bm{u}}_{\kappa_n}\rightharpoonup\dot{\bm{u}}$ in $L^2(0,T;\bm{L}^2(\Omega))$ as $n\to\infty$ established in Lemma~\ref{lem:4}, we obtain that:
	\begin{equation}
		\label{pezzo1}
		\begin{aligned}
			&\int_{0}^{T}\langle \ddot{\bm{u}}_{\kappa_n}(t),\bm{v}(t)\rangle_{\bm{V}^\ast(\Omega),\bm{V}(\Omega)} \dd t\\
			&= \int_{\Omega}\dot{\bm{u}}_{\kappa_n}(T)\cdot\bm{v}(T)\dd x-\int_{\Omega}\bm{u}_1\cdot\bm{v}(0)\dd x-\int_{0}^{T}\int_{\Omega}\dot{\bm{u}}_{\kappa_n}(t)\cdot\dot{\bm{v}}(t)\dd x\dd t\\
			&\to\int_{\Omega}\dot{\bm{u}}(T)\cdot\bm{v}(T)\dd x-\int_{\Omega}\bm{u}_1\cdot\bm{v}(0)\dd x-\int_{0}^{T}\int_{\Omega}\dot{\bm{u}}(t)\cdot\dot{\bm{v}}(t)\dd x\dd t,\quad\textup{ as }n\to\infty.
		\end{aligned}
	\end{equation}
	
	The convergence $\dot{\bm{u}}_{\kappa_n}(T)\rightharpoonup\dot{\bm{u}}(T)$ in $\bm{L}^2(\Omega)$ as $n\to\infty$ established in Lemma~\ref{lem:5}, the convergence $\dot{\bm{u}}_{\kappa_n}\to\dot{\bm{u}}$ in $L^2(0,T;\bm{L}^2(\Omega))$ as $n\to\infty$ established in Lemma~\ref{lem:5}, and the convergence $\bm{u}_{\kappa_n}\to\bm{u}$ in $\mathcal{C}^0([0,T];\bm{L}^2(\Omega))$ as $n\to\infty$ established in Lemma~\ref{lem:4} give:
	\begin{equation}
		\label{pezzo2}
		\begin{aligned}
			&-\int_{0}^{T} \langle\ddot{\bm{u}}_{\kappa_n}(t),\bm{u}_{\kappa_n}(t)\rangle_{\bm{V}^\ast(\Omega),\bm{V}(\Omega)} \dd t\\
			&=-\int_{\Omega}\dot{\bm{u}}_{\kappa_n}(T)\cdot\bm{u}_{\kappa_n}(T)\dd x + \int_{\Omega} \bm{u}_1 \cdot \bm{u}_0 \dd x +\|\dot{\bm{u}}_{\kappa_n}\|_{L^2(0,T;\bm{L}^2(\Omega))}^2\\
			&\to-\int_{\Omega}\dot{\bm{u}}(T)\cdot\bm{u}(T)\dd x + \int_{\Omega} \bm{u}_1 \cdot \bm{u}_0 \dd x+\|\dot{\bm{u}}\|_{L^2(0,T;\bm{L}^2(\Omega))}^2,\quad\textup{ as }n\to\infty.
		\end{aligned}
	\end{equation}
	
	The convergence $\dot{\bm{u}}_{\kappa_n}\rightharpoonup\dot{\bm{u}}$ in $L^2(0,T;\bm{V}(\Omega))$ as $n\to\infty$ established in~\eqref{convproc1} and the continuity of the operator $\mathcal{B}$ (Lemma~\ref{lem:3}) imply that:
	\begin{equation}
		\label{pezzo3a}
		\int_{0}^{T} \langle\mathcal{B}\dot{\bm{u}}_{\kappa_n}(t),\bm{v}(t)\rangle_{\bm{V}^\ast(\Omega),\bm{V}(\Omega)} \dd t
		\to\int_{0}^{T} \langle\mathcal{B}\dot{\bm{u}}(t),\bm{v}(t)\rangle_{\bm{V}^\ast(\Omega),\bm{V}(\Omega)} \dd t,\quad\textup{ as }n\to\infty.
	\end{equation}
	
	The convergence $\bm{u}_{\kappa_n} \rightharpoonup \bm{u}$ in $\mathcal{C}^0([0,T];\bm{V}(\Omega))$ as $n\to\infty$ established in~\eqref{convproc1} implies that:
	\begin{equation}
		\label{pezzo3}
		\begin{aligned}
			&\limsup_{n\to\infty}\left(-\int_{0}^{T} \langle\mathcal{B}\dot{\bm{u}}_\kappa(t),\bm{u}_\kappa(t)\rangle_{\bm{V}^\ast(\Omega),\bm{V}(\Omega)} \dd t\right)\\
			&\le-\dfrac{1}{2}\int_{\Omega}B^{ijk\ell} e_{k\|\ell}(\bm{u}(T)) e_{i\|j}(\bm{u}(T)) \dd x+\dfrac{1}{2}\int_{\Omega}B^{ijk\ell} e_{k\|\ell}(\bm{u}_0) e_{i\|j}(\bm{u}_0) \dd x.
		\end{aligned}
	\end{equation}
	
	The convergence $\bm{u}_{\kappa_n} \rightharpoonup \bm{u}$ in $L^2(0,T;\bm{V}(\Omega))$ as $n\to\infty$ established in~\eqref{convproc1} and the continuity of the operator $\mathcal{A}$ (Lemma~\ref{lem:2}) imply that:
	\begin{equation}
		\label{pezzo4a}
		\int_{0}^{T} \langle\mathcal{A}\bm{u}_{\kappa_n}(t),\bm{v}(t)\rangle_{\bm{V}^\ast(\Omega),\bm{V}(\Omega)} \dd t
		\to\int_{0}^{T} \langle\mathcal{A}\bm{u}(t),\bm{v}(t)\rangle_{\bm{V}^\ast(\Omega),\bm{V}(\Omega)} \dd t,\quad\textup{ as }n\to\infty.
	\end{equation}
	
	The convergence $\bm{u}_{\kappa_n}\rightharpoonup\bm{u}$ in $L^2(0,T;\bm{V}(\Omega))$ as $n\to\infty$ established in~\eqref{convproc1} implies that:
	\begin{equation}
		\label{pezzo4}
		\begin{aligned}
			&\limsup_{n\to\infty}\left(-\int_{0}^{T} \langle\mathcal{A}\bm{u}_{\kappa_n}(t),\bm{u}_{\kappa_n}(t)\rangle_{\bm{V}^\ast(\Omega),\bm{V}(\Omega)} \dd t\right) \le
			-\int_{0}^{T} \int_{\Omega} A^{ijk\ell} e_{k\|\ell}(\bm{u}(t)) e_{i\|j}(\bm{u}(t)) \dd x \dd t.
		\end{aligned}
	\end{equation}
	
	For what concerns the penalty term, we observe that the monotonicity of $-\{\cdot\}^{-}$ (Lemma~\ref{lem:1}) gives
	\begin{equation}
		\label{pezzo5}
		-\dfrac{1}{\kappa}\int_{0}^{T}\int_{\Gamma}\{[\bm{I}+u_{i,\kappa}(t)\bm{e}^i]\cdot\bm{q}\}^{-}\left([\bm{I}+v_i(t)\bm{e}^i]\cdot\bm{q}-[\bm{I}+u_{i,\kappa}(t)\bm{e}^i]\cdot\bm{q}\right) \dd\Gamma\dd t \le 0,
	\end{equation}
	for all $\kappa>0$.
	
	Finally, the convergence $\bm{u}_{\kappa_n}\rightharpoonup\bm{u}$ in $L^2(0,T;\bm{L}^2(\Omega))$ as $n\to\infty$ established in~\eqref{convproc1} implies that:
	\begin{equation}
		\label{pezzo6}
		-\int_{0}^{T}\int_{\Omega} f^i(t) u_{i,\kappa_n}(t) \dd x\dd t\to-\int_{0}^{T}\int_{\Omega} f^i(t) u_i(t) \dd x\dd t,\quad\textup{ as }n\to\infty.
	\end{equation}
	
	Combining~\eqref{pezzo1}--\eqref{pezzo6}, we obtain that the limit $\bm{u}$ satisfies the following hyperbolic variational inequalities:
	\begin{equation*}
		\begin{aligned}
			&2\rho\int_{\Omega}\dot{\bm{u}}(T)\cdot(\bm{v}(T)-\bm{u}(T))\dd x-2\rho\int_{\Omega}\bm{u}_1\cdot(\bm{v}(0)-\bm{u}_0)\dd x-2\rho\int_{0}^{T}\int_{\Omega}\dot{\bm{u}}(t)\cdot(\dot{\bm{v}}(t)-\dot{\bm{u}}(t))\dd x\dd t\\
			&\quad+\int_{0}^{T}\int_{\Omega}A^{ijk\ell} e_{k\|\ell}(\bm{u}(t)) e_{i\|j}(\bm{v}(t)-\bm{u}(t))\dd x\dd t
			+\int_{0}^{T} \int_{\Omega}B^{ijk\ell} e_{k\|\ell}(\dot{\bm{u}}(t)) e_{i\|j}(\bm{v}(t))\dd x\dd t\\
			&\quad-\dfrac{1}{2}\int_{\Omega}B^{ijk\ell} e_{k\|\ell}(\bm{u}(T)) e_{i\|j}(\bm{u}(T)) \dd x+\dfrac{1}{2}\int_{\Omega}B^{ijk\ell} e_{k\|\ell}(\bm{u}_0) e_{i\|j}(\bm{u}_0) \dd x\\
			&\ge \int_{0}^{T}\int_{\Omega}f^i(t)(v_i(t)-u_i(t))\dd x\dd t,
		\end{aligned}
	\end{equation*}
	which is exactly the governing model of Problem~\ref{problem0}. This completes the proof.
\end{proof}

Finally, we observe that if we assume that there exists a \emph{shut-down time} $T_0>0$ such that $\bm{f}(t)=\bm{0}$ for a.a. $t>T_0$, we are able to establish that, for \emph{certain materials}, the linearly elastic viscoelastic body returns to its reference configuration when the applied body force is lifted and that, in agreement with the theory of Kelvin-Voigt materials~\cite{Christensen12,Slonimsky67}, the decay is exponential.

\begin{theorem}
	\label{th:decay}
	Assume that there exists $0<T_0<T$ such that $\bm{f}(t)=\bm{0}$ for a.a. $T_0<t<T$. Assume that the following \emph{smallness condition} holds:
	\begin{equation}
		\label{eq:smallness}
		\dfrac{\rho}{C_e^{(1)}\max_{i,j,k,\ell}\|B^{ijk\ell}\|_{\mathcal{C}^0(\overline{\Omega})}}\le\dfrac{1}{2c_0^2C_e^{(2)}}.
	\end{equation}
	
	For each $\kappa>0$ and for a.a. $T_0\le t\le T$, define the energy:
	\begin{equation*}
		E_\kappa(t):=\rho\int_{\Omega}|\dot{\bm{u}}_\kappa(t)|^2\dd x+\dfrac{1}{2}\int_{\Omega}A^{ijk\ell}e_{k\|\ell}(\bm{u}_\kappa(t))e_{i\|j}(\bm{u}_\kappa(t))\dd x
		+\dfrac{1}{2\kappa}\int_{\Gamma}|\{[\bm{I}+u_{i,\kappa}(t)\bm{e}^i]\cdot\bm{q}\}^{-}|^2\dd\Gamma.
	\end{equation*}
	
	Then, $E_\kappa\in AC([T_0,T])$ and there exists a constant $\tilde{C}=\tilde{C}(\Omega,\lambda,\mu,\xi,\theta,\rho)>0$ and a constant $c_{{\textup{d}}}=c_{{\textup{d}}}(\Omega,\lambda,\mu,\xi,\theta,\rho)>0$ such that:
	\begin{equation}
		\label{eq:bound}
		\|\bm{u}_\kappa(t)\|_{\bm{V}(\Omega)}+\|\dot{\bm{u}}_\kappa(t)\|_{\bm{L}^2(\Omega)}\le\sqrt{\tilde{C}}\exp\left(-\dfrac{c_{{\textup{d}}}}{2}(t-T_0)\right),
	\end{equation}
	for a.a. $T_0<t< T$ and all $\kappa>0$.
\end{theorem}
\begin{proof}
	Let $\kappa>0$ be given. To show that the function $E_\kappa$ is of class $L^1(T_0,T)$ we proceed as in Lemma~\ref{lem:5}. Observe that, thanks to Corollary~10.1.26 of~\cite{YP}, it results that $E_\kappa$ is differentiable a.e. in $(T_0,T)$ and that $\dot{E}_\kappa\in L^1(T_0,T)$.
	
	Differentiating $E_\kappa$ and invoking the variational equations of Problem~\ref{problem1} tested at $\bm{v}=\dot{\bm{u}}_\kappa(t)$ give:
	\begin{equation}
		\label{eq:dEkappa}
		\begin{aligned}
			&\dot{E}_\kappa(t)=\rho\dfrac{\dd}{\dd t}\left(\int_{\Omega}|\dot{\bm{u}}_\kappa(t)|^2\dd x\right)+\dfrac{1}{2}\dfrac{\dd}{\dd t}\left(\int_{\Omega}A^{ijk\ell}e_{k\|\ell}(\bm{u}_\kappa(t))e_{i\|j}(\bm{u}_\kappa(t))\dd x\right)\\
			&\quad+\dfrac{1}{2\kappa}\dfrac{\dd}{\dd t}\left(\int_{\Gamma}\left|\{[\bm{I}+u_{i,\kappa}(t)\bm{e}^i]\cdot\bm{q}\}^{-}\right|^2\dd\Gamma\right)\\
			&=-\int_{\Omega}B^{ijk\ell}e_{k\|\ell}(\dot{\bm{u}}_\kappa(t))e_{i\|j}(\dot{\bm{u}}_\kappa(t))\dd x\le 0,
		\end{aligned}
	\end{equation}
	showing that $\dot{E}_\kappa\in L^1(T_0,T)$. Therefore,in the same spirit as Lemma~\ref{lem:5}, we obtain that $E_\kappa\in W^{1,1}(T_0,T)\subset AC([T_0,T])$ and that $E_\kappa$ decreases in $[T_0,T]$. The continuity of $E_\kappa$ in $[T_0,T]$ implies that the mapping
	\begin{equation}
		\label{eq:map}
		[0,T]\ni t\mapsto \int_{\Omega}|\dot{\bm{u}}_\kappa(t)|^2\dd x,
	\end{equation}
	is continuous and, thanks to~\eqref{pm5-2}, is bounded in $[T_0,T]$ independently of $\kappa$.
	
	In correspondence of $\varepsilon>0$, that we will determine later, we denote and define the modified energy $H_{\kappa,\varepsilon}$ by:
	\begin{equation*}
		H_{\kappa,\varepsilon}(t):=E_\kappa(t)+\varepsilon\rho\int_{\Omega}\dot{\bm{u}}_\kappa(t)\cdot\bm{u}_\kappa(t)\dd x,\quad\textup{ for a.a. }t\in (T_0,T).
	\end{equation*}
	
	Thanks to the regularity of $\bm{u}_\kappa$ in Lemma~\ref{lem:5} and thanks to the regularity of $E_\kappa$ observed beforehand, we can differentiate $H_{\kappa,\varepsilon}$ with respect to the variable $t$, getting
	\begin{equation}
		\label{step1}
		\begin{aligned}
			&\dot{H}_{\kappa,\varepsilon}(t)=\dot{E}_\kappa(t)+\varepsilon\rho\langle\ddot{\bm{u}}_\kappa(t),\bm{u}_\kappa(t)\rangle_{\bm{V}^\ast(\Omega),\bm{V}(\Omega)}+\varepsilon\rho\int_{\Omega}|\dot{\bm{u}}_\kappa(t)|^2\dd x\\
			&=-\int_{\Omega}B^{ijk\ell}e_{k\|\ell}(\dot{\bm{u}}_\kappa(t))e_{i\|j}(\dot{\bm{u}}_\kappa(t))\dd x
			-\dfrac{\varepsilon}{2}\int_{\Omega}A^{ijk\ell}e_{k\|\ell}(\bm{u}_\kappa(t))e_{i\|j}(\bm{u}_\kappa(t))\dd x\\
			&\quad-\dfrac{\varepsilon}{2}\int_{\Omega}B^{ijk\ell}e_{k\|\ell}(\dot{\bm{u}}_\kappa(t))e_{i\|j}(\bm{u}_\kappa(t))\dd x
			-\dfrac{\varepsilon}{2\kappa}\int_{\Gamma}\left|\{[\bm{I}+u_{i,\kappa}(t)\bm{e}^i]\cdot\bm{q}\}^{-}\right|^2\dd\Gamma\\
			&\quad-\dfrac{\varepsilon}{2\kappa}\int_{\Gamma}\{[\bm{I}+u_{i,\kappa}(t)\bm{e}^i]\cdot\bm{q}\}^{-}(\bm{I}\cdot\bm{q})\dd\Gamma
			+\varepsilon\rho\int_{\Omega}|\dot{\bm{u}}_\kappa(t)|^2\dd x\\
			&\le-\int_{\Omega}B^{ijk\ell}e_{k\|\ell}(\dot{\bm{u}}_\kappa(t))e_{i\|j}(\dot{\bm{u}}_\kappa(t))\dd x
			-\dfrac{\varepsilon}{2}\int_{\Omega}A^{ijk\ell}e_{k\|\ell}(\bm{u}_\kappa(t))e_{i\|j}(\bm{u}_\kappa(t))\dd x\\
			&\quad+\dfrac{\varepsilon}{2}\left[\left(\int_{\Omega}B^{ijk\ell}e_{k\|\ell}(\dot{\bm{u}}_\kappa(t))e_{i\|j}(\dot{\bm{u}}_\kappa(t))\dd x\right)^{1/2}\left(\int_{\Omega}B^{ijk\ell}e_{k\|\ell}(\bm{u}_\kappa(t))e_{i\|j}(\bm{u}_\kappa(t))\dd x\right)^{1/2}\right]\\
			&\quad-\dfrac{\varepsilon}{2\kappa}\int_{\Gamma}\left|\{[\bm{I}+u_{i,\kappa}(t)\bm{e}^i]\cdot\bm{q}\}^{-}\right|^2\dd\Gamma
			+\varepsilon\rho\int_{\Omega}|\dot{\bm{u}}_\kappa(t)|^2\dd x\\
			&\le\left(-1+\dfrac{\varepsilon\eta}{4}\right)\int_{\Omega}B^{ijk\ell}e_{k\|\ell}(\dot{\bm{u}}_\kappa(t))e_{i\|j}(\dot{\bm{u}}_\kappa(t))\dd x
			-\dfrac{\varepsilon}{2}\int_{\Omega}A^{ijk\ell}e_{k\|\ell}(\bm{u}_\kappa(t))e_{i\|j}(\bm{u}_\kappa(t))\dd x\\
			&\quad+\dfrac{\varepsilon}{4\eta}\int_{\Omega}B^{ijk\ell}e_{k\|\ell}(\bm{u}_\kappa(t))e_{i\|j}(\bm{u}_\kappa(t))\dd x
			-\dfrac{\varepsilon}{2\kappa}\int_{\Gamma}\left|\{[\bm{I}+u_{i,\kappa}(t)\bm{e}^i]\cdot\bm{q}\}^{-}\right|^2\dd\Gamma
			+\varepsilon\rho\int_{\Omega}|\dot{\bm{u}}_\kappa(t)|^2\dd x\\
			&\le\left(-1+\dfrac{\varepsilon\eta}{4}\right)\int_{\Omega}B^{ijk\ell}e_{k\|\ell}(\dot{\bm{u}}_\kappa(t))e_{i\|j}(\dot{\bm{u}}_\kappa(t))\dd x+\varepsilon\rho\int_{\Omega}|\dot{\bm{u}}_\kappa(t)|^2\dd x\\
			&\quad-\dfrac{\varepsilon}{2\kappa}\int_{\Gamma}\left|\{[\bm{I}+u_{i,\kappa}(t)\bm{e}^i]\cdot\bm{q}\}^{-}\right|^2\dd\Gamma\\
			&\quad+\dfrac{\varepsilon}{2}\left(\dfrac{C_e^{(1)}\max_{i,j,k,\ell}\|B^{ijk\ell}\|_{\mathcal{C}^0(\overline{\Omega})}}{\eta }-1\right)\int_{\Omega}A^{ijk\ell}e_{k\|\ell}(\bm{u}_\kappa(t))e_{i\|j}(\bm{u}_\kappa(t))\dd x,
		\end{aligned}
	\end{equation}
	for a.a. $T_0<t<T$, where the second equality holds thanks to~\eqref{eq:dEkappa} and the variational equations of Problem~\ref{problem1} tested at $\bm{u}_\kappa(t)$, the first estimate is due to the Cauchy-Schwarz inequality (cf., e.g., Proposition~5.3 in~\cite{CanDAp14}), the second estimate is due to Young's inequality~\cite{Young1912} for all $\eta>0$, and the third estimate is due to Korn's inequality (Theorem~\ref{KornCart}), the boundedness of the fourth order three-dimensional viscosity tensor $\{B^{ijk\ell}\}$, and the uniform positive-definiteness of the fourth order three-dimensional elasticity tensor $\{A^{ijk\ell}\}$. Choosing $\eta:=2C_e^{(1)}\max_{i,j,k,\ell}\|B^{ijk\ell}\|_{\mathcal{C}^0(\overline{\Omega})}$ and $0<\varepsilon\le \frac{2}{\eta}$ in~\eqref{step1} gives
	\begin{equation}
		\label{step2}
		\begin{aligned}
			&\dot{H}_{\kappa,\varepsilon}(t)\le-\dfrac{1}{2}\int_{\Omega}B^{ijk\ell}e_{k\|\ell}(\dot{\bm{u}}_\kappa(t))e_{i\|j}(\dot{\bm{u}}_\kappa(t))\dd x+\varepsilon\rho\int_{\Omega}|\dot{\bm{u}}_\kappa(t)|^2\dd x\\
			&\quad-\dfrac{\varepsilon}{2\kappa}\int_{\Gamma}\left|\{[\bm{I}+u_{i,\kappa}(t)\bm{e}^i]\cdot\bm{q}\}^{-}\right|^2\dd\Gamma-\dfrac{\varepsilon}{4}\int_{\Omega}A^{ijk\ell}e_{k\|\ell}(\bm{u}_\kappa(t))e_{i\|j}(\bm{u}_\kappa(t))\dd x\\
			&\le-\left(\dfrac{1}{2C_e^{(2)}c_0^2}-\varepsilon\rho\right)\int_{\Omega}|\dot{\bm{u}}_\kappa(t)|^2\dd x-\dfrac{\varepsilon}{4}\int_{\Omega}A^{ijk\ell}e_{k\|\ell}(\bm{u}_\kappa(t))e_{i\|j}(\bm{u}_\kappa(t))\dd x\\
			&\quad-\dfrac{\varepsilon}{2\kappa}\int_{\Gamma}\left|\{[\bm{I}+u_{i,\kappa}(t)\bm{e}^i]\cdot\bm{q}\}^{-}\right|^2\dd\Gamma\\
			&\le-\left(\dfrac{1}{2c_0^2C_e^{(2)}}-\dfrac{\rho}{C_e^{(1)}\max_{i,j,k,\ell}\|B^{ijk\ell}\|_{\mathcal{C}^0(\overline{\Omega})}}\right)\int_{\Omega}|\dot{\bm{u}}_\kappa(t)|^2\dd x\\
			&\quad-\dfrac{1}{2C_e^{(1)}\max_{i,j,k,\ell}\|B^{ijk\ell}\|_{\mathcal{C}^0(\overline{\Omega})}}\int_{\Omega}A^{ijk\ell}e_{k\|\ell}(\bm{u}_\kappa(t))e_{i\|j}(\bm{u}_\kappa(t))\dd x\\
			&\quad-\dfrac{1}{2C_e^{(1)}\kappa\max_{i,j,k,\ell}\|B^{ijk\ell}\|_{\mathcal{C}^0(\overline{\Omega})}}\int_{\Gamma}\left|\{[\bm{I}+u_{i,\kappa}(t)\bm{e}^i]\cdot\bm{q}\}^{-}\right|^2\dd\Gamma\\
			&\le-\min\left\{\dfrac{1}{\rho}\left(\dfrac{1}{2c_0^2C_e^{(2)}}-\dfrac{\rho }{C_e^{(1)}\max_{i,j,k,\ell}\|B^{ijk\ell}\|_{\mathcal{C}^0(\overline{\Omega})}}\right),\dfrac{1}{C_e^{(1)}\max_{i,j,k,\ell}\|B^{ijk\ell}\|_{\mathcal{C}^0(\overline{\Omega})}}\right\}E_\kappa(t),
		\end{aligned}
	\end{equation}
	and we observe that, thanks to the \emph{smallness condition}~\eqref{eq:smallness}, the coefficient of the first term in the second-last inequality is non-positive.
	
	Let us now observe that, on the one hand, an application of the Cauchy-Schwarz inequality (cf., e.g., Proposition~5.3 in~\cite{CanDAp14}) Young's inequality~\cite{Young1912}, Korn's inequality (Theorem~\ref{KornCart}) and the uniform positive-definiteness of the fourth order three-dimensional elasticity tensor $\{A^{ijk\ell}\}$ gives that for a.a. $T_0<t<T$:
	\begin{equation}
		\label{eq:equiv-1}
		\begin{aligned}
			&H_{\kappa,\varepsilon}(t)\le E_\kappa(t)+\dfrac{\varepsilon\rho}{2}\|\dot{\bm{u}}_\kappa(t)\|_{\bm{L}^2(\Omega)}^2+\dfrac{\varepsilon\rho}{2}\|\bm{u}_\kappa(t)\|_{\bm{V}(\Omega)}^2\\
			&\le\rho\left(1+\dfrac{\varepsilon}{2}\right)\int_{\Omega}|\dot{\bm{u}}_\kappa(t)|^2\dd x+\dfrac{1}{2}\left(1+\varepsilon\rho c_0^2C_e^{(1)}\right)\int_{\Omega}A^{ijk\ell}e_{k\|\ell}(\bm{u}_\kappa(t))e_{i\|j}(\bm{u}_\kappa(t))\dd x\\
			&\quad+\dfrac{1}{2\kappa}\int_{\Gamma}\left|\{[\bm{I}+u_{i,\kappa}(t)\bm{e}^i]\cdot\bm{q}\}^{-}\right|^2\dd\Gamma
			\le\max\left\{1+\dfrac{\varepsilon}{2},1+\varepsilon\rho c_0^2C_e^{(1)}\right\}E_\kappa(t).
		\end{aligned}
	\end{equation}
	
	On the other hand, choosing 
	$$
	0<\varepsilon<\min\left\{\frac{2}{\eta},2,\frac{1}{\rho c_0^2C_e^{(1)}}\right\}=\min\left\{\frac{1}{C_e^{(1)}\max_{i,j,k,\ell}\|B^{ijk\ell}\|_{\mathcal{C}^0(\overline{\Omega})}},2,\frac{1}{\rho c_0^2C_e^{(1)}}\right\},
	$$
	we obtain that another application of the Cauchy-Schwarz inequality (cf., e.g., Proposition~5.3 in~\cite{CanDAp14}) Young's inequality~\cite{Young1912}, Korn's inequality (Theorem~\ref{KornCart}) and the uniform positive-definiteness of the fourth order three-dimensional elasticity tensor $\{A^{ijk\ell}\}$ gives that for a.a. $T_0<t<T$:
	\begin{equation}
		\label{eq:equiv-2}
		\begin{aligned}
			&H_{\kappa,\varepsilon}(t)\ge E_\kappa(t)-\dfrac{\varepsilon\rho}{2}\|\dot{\bm{u}}_\kappa(t)\|_{\bm{L}^2(\Omega)}^2-\dfrac{\varepsilon\rho}{2}\|\bm{u}_\kappa(t)\|_{\bm{V}(\Omega)}^2\\
			&\ge\rho\left(1-\dfrac{\varepsilon}{2}\right)\int_{\Omega}|\dot{\bm{u}}_\kappa(t)|^2\dd x+\dfrac{1}{2}\left(1-\varepsilon\rho c_0^2C_e^{(1)}\right)\int_{\Omega}A^{ijk\ell}e_{k\|\ell}(\bm{u}_\kappa(t))e_{i\|j}(\bm{u}_\kappa(t))\dd x\\
			&\quad+\dfrac{1}{2\kappa}\int_{\Gamma}\left|\{[\bm{I}+u_{i,\kappa}(t)\bm{e}^i]\cdot\bm{q}\}^{-}\right|^2\dd\Gamma
			\ge\min\left\{1-\dfrac{\varepsilon}{2},1-\varepsilon\rho c_0^2C_e^{(1)}\right\}E_\kappa(t).
		\end{aligned}
	\end{equation}
	
	Combining~\eqref{eq:equiv-1} and~\eqref{eq:equiv-2} gives that $E_\kappa$ and $H_{\kappa,\varepsilon}$ are equivalent, where the equivalence constants are independent of $T_0\le t\le T$, namely:
	\begin{equation}
		\label{eq:equiv-3}
		\min\left\{1-\dfrac{\varepsilon}{2},1-\varepsilon\rho c_0^2C_e^{(1)}\right\}E_\kappa(t)\le H_{\kappa,\varepsilon}(t)\le\max\left\{1+\dfrac{\varepsilon}{2},1+\varepsilon\rho c_0^2C_e^{(1)}\right\}E_\kappa(t),
	\end{equation}
	for all $T_0\le t\le T$. Combining~\eqref{step2} with~\eqref{eq:equiv-3} gives
	\begin{equation*}
		\dot{H}_{\kappa,\varepsilon}(t)\le\dfrac{-\min\left\{\dfrac{1}{\rho}\left(\dfrac{1}{2C_e^{(2)}c_0^2}-\dfrac{\rho }{C_e^{(1)}\max_{i,j,k,\ell}\|B^{ijk\ell}\|_{\mathcal{C}^0(\overline{\Omega})}}\right),\dfrac{1}{C_e^{(1)}\max_{i,j,k,\ell}\|B^{ijk\ell}\|_{\mathcal{C}^0(\overline{\Omega})}}\right\}}{\max\left\{1+\dfrac{\varepsilon}{2},1+\varepsilon\rho c_0^2C_e^{(1)}\right\}}H_{\kappa,\varepsilon}(t),
	\end{equation*}
	for a.a. $T_0<t<T$. Letting
	\begin{equation*}
		c_{\textup{d}}:=\dfrac{\min\left\{\dfrac{1}{\rho}\left(\dfrac{1}{2C_e^{(2)}c_0^2}-\dfrac{\rho }{C_e^{(1)}\max_{i,j,k,\ell}\|B^{ijk\ell}\|_{\mathcal{C}^0(\overline{\Omega})}}\right),\dfrac{1}{C_e^{(1)}\max_{i,j,k,\ell}\|B^{ijk\ell}\|_{\mathcal{C}^0(\overline{\Omega})}}\right\}}{\max\left\{1+\dfrac{\varepsilon}{2},1+\varepsilon\rho c_0^2C_e^{(1)}\right\}},
	\end{equation*}
	we have that an application of the Gronwall inequality~\cite{GW} gives
	\begin{equation*}
		H_{\kappa,\varepsilon}(t)\le H_{\kappa,\varepsilon}(T_0)\exp(-c_{\textup{d}}(t-T_0)),\quad\textup{ for all }T_0\le t\le T,
	\end{equation*}
	and, besides, combining the latter with~\eqref{eq:equiv-3} gives:
	\begin{equation*}
		\min\left\{\dfrac{1}{2c_0^2C_e^{(1)}},\rho\right\}(\|\dot{\bm{u}}_\kappa(t)\|_{\bm{L}^2(\Omega)}^2+\|\bm{u}_\kappa(t)\|_{\bm{V}(\Omega)}^2)\le E_\kappa(t)\le \dfrac{\max\left\{1+\dfrac{\varepsilon}{2},1+\varepsilon\rho c_0^2C_e^{(1)}\right\}}{\min\left\{1-\dfrac{\varepsilon}{2},1-\varepsilon\rho c_0^2C_e^{(1)}\right\}}E_\kappa(T_0)\exp(-c_{\textup{d}}(t-T_0)),
	\end{equation*}
	for a.a. $T_0\le t\le T$. Therefore the conclusion follows by observing that $E_\kappa(T_0)$ is bounded independently of $\kappa$, the mapping~\eqref{eq:map} being bounded in $[T_0,T]$ independently of $\kappa$. Letting
	\begin{equation*}
		\tilde{C}:=\dfrac{\max\left\{1+\dfrac{\varepsilon}{2},1+\varepsilon\rho c_0^2C_e^{(1)}\right\}}{\left(\min\left\{\dfrac{1}{2c_0^2C_e^{(1)}},\rho\right\}\right)\left(\min\left\{1-\dfrac{\varepsilon}{2},1-\varepsilon\rho c_0^2C_e^{(1)}\right\}\right)}E_\kappa(T_0),
	\end{equation*}
	thus gives the sought conclusion and the proof is complete.
\end{proof}

Let us now discuss the smallness condition~\eqref{eq:smallness}. Observe that the position of the constants $C_e^{(1)}$ and $C_e^{(2)}$ as well as the direction of the inequality suggest that the elastic behaviour is dominant over the viscous one. This is true for materials of Kelvin-Voigt type (cf., e.g., \cite{Christensen12,Slonimsky67}) for which it is true that when the applied force is removed the body returns to its rest position and that the magnitude of the displacement decays exponentially.

\addtocontents{toc}{\protect\setcounter{tocdepth}{1}}

\section*{Conclusion and final remarks}

In this paper, we demonstrated the existence of solutions for a model that describes the motion of a three-dimensional linearly viscoelastic body constrained to remaining confined within a specific half-space.We proposed and justified a concept of solution for this model by studying the asymptotic behaviour of the minimisers of the classical energy of three-dimensional linearly viscoelastic bodies with a relaxation term reflecting the degree to which a particular deformation violates the given constraint.
We showed that the solutions for the penalized model converge to the solutions of a system of hyperbolic-like variational inequalities. Essential for establishing the obtained existence result is Lemma~\ref{lem:tr}, which asserts the compactness of the time-dependent version of the trace operator.

\section*{Declarations}

%

\subsection*{Acknowledgements}

The author is extremely grateful to the Anonymous Referees for their careful reading and for the suggested improvements.

\subsection*{Ethical Approval}

Not applicable.

\subsection*{Availability of Supporting Data}

Not applicable.

\subsection*{Competing Interests}

All authors certify that they have no affiliations with or involvement in any organisation or entity with any competing interests in the subject matter or materials discussed in this manuscript.

\subsection*{Funding}

This research was partly supported by the University Development Fund of The Chinese University of Hong Kong, Shenzhen.
	
	\bibliographystyle{abbrvnat} 
	\bibliography{references.bib}

\begin{thebibliography}{58}
\providecommand{\natexlab}[1]{#1}
\providecommand{\url}[1]{\texttt{#1}}
\expandafter\ifx\csname urlstyle\endcsname\relax
  \providecommand{\doi}[1]{doi: #1}\else
  \providecommand{\doi}{doi: \begingroup \urlstyle{rm}\Url}\fi

\bibitem[Attouch et~al.(2014)Attouch, Buttazzo, and Michaille]{ABM14}
H.~Attouch, G.~Buttazzo, and G.~Michaille.
\newblock \emph{Variational analysis in {S}obolev and {BV} spaces}, volume~17
  of \emph{MOS-SIAM Series on Optimization}.
\newblock Society for Industrial and Applied Mathematics (SIAM), Philadelphia,
  PA; Mathematical Optimization Society, Philadelphia, PA, second edition,
  2014.
\newblock Applications to PDEs and optimization.

\bibitem[Barboteu and Danan(2016)]{BD16}
M.~Barboteu and D.~Danan.
\newblock Analysis of a dynamic viscoelastic contact problem with normal
  compliance, normal damped response, and nonmonotone slip rate dependent
  friction.
\newblock \emph{Adv. Math. Phys.}, pages Art. ID 1562509, 15, 2016.

\bibitem[Barboteu et~al.(2015)Barboteu, Bartosz, and Kalita]{BKK15}
M.~Barboteu, K.~Bartosz, and P.~Kalita.
\newblock A dynamic viscoelastic contact problem with normal compliance, finite
  penetration and nonmonotone slip rate dependent friction.
\newblock \emph{Nonlinear Anal. Real World Appl.}, 22:\penalty0 452--472, 2015.

\bibitem[Bock and Jaru\v{s}ek(2010)]{BJ10}
I.~Bock and J.~Jaru\v{s}ek.
\newblock Dynamic contact problem for a bridge modeled by a viscoelastic full
  von {K}\'{a}rm\'{a}n system.
\newblock \emph{Z. Angew. Math. Phys.}, 61\penalty0 (5):\penalty0 865--876,
  2010.

\bibitem[Bock and Jaru\v{s}ek(2011)]{BJ11}
I.~Bock and J.~Jaru\v{s}ek.
\newblock Unilateral dynamic contact problem for viscoelastic
  {R}eissner-{M}indlin plates.
\newblock \emph{Nonlinear Anal.}, 74\penalty0 (12):\penalty0 4192--4202, 2011.

\bibitem[Bock and Jaru\v{s}ek(2015)]{BJ15}
I.~Bock and J.~Jaru\v{s}ek.
\newblock A vibrating thermoelastic plate in a contact with an obstacle.
\newblock \emph{Tatra Mt. Math. Publ.}, 63:\penalty0 39--52, 2015.

\bibitem[Bock et~al.(2016)Bock, Jaru\v{s}ek, and
  \v{S}ilhav\'{y}]{BockJarSil2016}
I.~Bock, J.~Jaru\v{s}ek, and M.~\v{S}ilhav\'{y}.
\newblock On the solutions of a dynamic contact problem for a thermoelastic von
  {K}\'{a}rm\'{a}n plate.
\newblock \emph{Nonlinear Anal. Real World Appl.}, 32:\penalty0 111--135, 2016.

\bibitem[Br\'{e}zis(1972)]{Bre72}
H.~Br\'{e}zis.
\newblock Probl\`emes unilat\'{e}raux.
\newblock \emph{J. Math. Pures Appl. (9)}, 51:\penalty0 1--168, 1972.

\bibitem[Br\'{e}zis and Stampacchia(1968)]{BrezisStampacchia1968}
H.~Br\'{e}zis and G.~Stampacchia.
\newblock Sur la r\'{e}gularit\'{e} de la solution d'in\'{e}quations
  elliptiques.
\newblock \emph{Bull. Soc. Math. France}, 96:\penalty0 153--180, 1968.

\bibitem[Cannarsa and D'Aprile(2015)]{CanDAp14}
P.~Cannarsa and T.~D'Aprile.
\newblock \emph{Introduction to measure theory and functional analysis},
  volume~89 of \emph{Unitext}.
\newblock Springer, Cham, 2015.
\newblock Translated from the 2008 Italian original, La Matematica per il 3+2.

\bibitem[Christensen(2012)]{Christensen12}
R.~M. Christensen.
\newblock \emph{Theory of Viscoelasticity. An Introduction}.
\newblock Elsevier, second edition, 2012.
\newblock \doi{https://doi.org/10.1016/B978-0-12-174252-2.X5001-7}.

\bibitem[Ciarlet(1988)]{Ciarlet1988}
P.~G. Ciarlet.
\newblock \emph{Mathematical Elasticity. Vol. I: Three-Dimensional Elasticity}.
\newblock North-Holland, Amsterdam, 1988.

\bibitem[Ciarlet(2000)]{Ciarlet2000}
P.~G. Ciarlet.
\newblock \emph{Mathematical Elasticity. Vol. III: {T}heory of Shells}.
\newblock North-Holland, Amsterdam, 2000.

\bibitem[Ciarlet(2005)]{Ciarlet2005}
P.~G. Ciarlet.
\newblock \emph{An {I}ntroduction to {D}ifferential {G}eometry with
  {A}pplications to {E}lasticity}.
\newblock Springer, Dordrecht, 2005.

\bibitem[Ciarlet(2025)]{Cia25}
P.~G. Ciarlet.
\newblock \emph{Linear and Nonlinear Functional Analysis with Applications}.
\newblock Society for Industrial and Applied Mathematics, Philadelphia, second
  edition, 2025.

\bibitem[Ciarlet and Ne\v{c}as(1987)]{CN87}
P.~G. Ciarlet and J.~Ne\v{c}as.
\newblock Injectivity and self-contact in nonlinear elasticity.
\newblock \emph{Arch. Rational Mech. Anal.}, 97\penalty0 (3):\penalty0
  171--188, 1987.

\bibitem[Ciarlet and Piersanti(2019)]{CiaPie2018b}
P.~G. Ciarlet and P.~Piersanti.
\newblock Obstacle problems for {K}oiter's shells.
\newblock \emph{{M}ath. {M}ech. {S}olids}, 24:\penalty0 3061--3079, 2019.

\bibitem[Conti et~al.(2022)Conti, Dell'Oro, and Pata]{ViscoQuestions2022}
M.~Conti, F.~Dell'Oro, and V.~Pata.
\newblock Some unexplored questions arising in linear viscoelasticity.
\newblock \emph{Journal of Functional Analysis}, 282\penalty0 (10):\penalty0
  109422, 2022.

\bibitem[Duvaut and Lions(1976)]{DuvLio76}
G.~Duvaut and J.~L. Lions.
\newblock \emph{Inequalities in {M}echanics and {P}hysics}.
\newblock Springer, Berlin, 1976.

\bibitem[Eck et~al.(2005)Eck, Jaru\v{s}ek, and Krbec]{EBJ05}
C.~Eck, J.~Jaru\v{s}ek, and M.~Krbec.
\newblock \emph{Unilateral contact problems}, volume 270 of \emph{Pure and
  Applied Mathematics (Boca Raton)}.
\newblock Chapman \& Hall/CRC, Boca Raton, FL, 2005.
\newblock Variational methods and existence theorems.

\bibitem[Evans(2010)]{Evans2010}
L.~C. Evans.
\newblock \emph{Partial {D}ifferential {E}quations}.
\newblock American Mathematical Society, Providence, {S}econd edition, 2010.

\bibitem[Evans and Gariepy(2015)]{EG15}
L.~C. Evans and R.~F. Gariepy.
\newblock \emph{Measure theory and fine properties of functions}.
\newblock Textbooks in Mathematics. CRC Press, Boca Raton, FL, revised edition,
  2015.

\bibitem[Figalli et~al.(2024)Figalli, Ros-Oton, and Serra]{FRS24}
A.~Figalli, X.~Ros-Oton, and J.~Serra.
\newblock The singular set in the {S}tefan problem.
\newblock \emph{J. Amer. Math. Soc.}, 37\penalty0 (2):\penalty0 305--389, 2024.

\bibitem[Friedrichs(1947)]{Friedrichs1947}
K.~O. Friedrichs.
\newblock On the boundary-value problems of the theory of elasticity and
  {K}orn's inequality.
\newblock \emph{Ann. of Math. (2)}, 48:\penalty0 441--471, 1947.

\bibitem[Gasi\'{n}ski and Papageorgiou(2006)]{GasPap2006}
L.~Gasi\'{n}ski and N.~S. Papageorgiou.
\newblock \emph{Nonlinear {A}nalysis}, volume~9 of \emph{Series in Mathematical
  Analysis and Applications}.
\newblock Chapman \& Hall/CRC, Boca Raton, 2006.

\bibitem[Glowinski et~al.(1981)Glowinski, Lions, and
  Tr\'emoli\`eres]{Glowinski1981}
R.~Glowinski, J.~L. Lions, and R.~Tr\'emoli\`eres.
\newblock \emph{{N}umerical {A}nalysis of {V}ariational {I}nequalities}.
\newblock {N}orth-{H}olland, {A}msterdam-{N}ew {Y}ork, 1981.

\bibitem[Gobert(1962)]{Gobert1962}
J.~Gobert.
\newblock Une in\'{e}galit\'{e} fondamentale de la th\'{e}orie de
  l'\'{e}lasticit\'{e}.
\newblock \emph{Bull. Soc. Roy. Sci. Li\`{e}ge}, 31:\penalty0 182--191, 1962.

\bibitem[Gronwall(1919)]{GW}
T.~H. Gronwall.
\newblock Note on the derivatives with respect to a parameter of the solutions
  of a system of differential equations.
\newblock \emph{Ann. of Math.}, 20\penalty0 (4):\penalty0 292--296, 1919.

\bibitem[Han and Sofonea(2002)]{HanSofonea2002}
W.~Han and M.~Sofonea.
\newblock \emph{Quasistatic contact problems in viscoelasticity and
  viscoplasticity}, volume~30 of \emph{AMS/IP Studies in Advanced Mathematics}.
\newblock American Mathematical Society, Providence, RI; International Press,
  Somerville, MA, 2002.

\bibitem[Hartman(2002)]{Har02}
P.~Hartman.
\newblock \emph{Ordinary differential equations}, volume~38 of \emph{Classics
  in Applied Mathematics}.
\newblock Society for Industrial and Applied Mathematics (SIAM), Philadelphia,
  PA, 2002.
\newblock Corrected reprint of the second (1982) edition [Birkh\"{a}user,
  Boston, MA; MR0658490 (83e:34002)], With a foreword by Peter Bates.

\bibitem[Hlav\'a\v{c}ek and Ne\v{c}as(1970{\natexlab{a}})]{HlaNec1970a}
I.~Hlav\'a\v{c}ek and J.~Ne\v{c}as.
\newblock On inequalities of {K}orn's type. {I}. {B}oundary-value problems for
  elliptic system of partial differential equations.
\newblock \emph{Arch. Rational Mech. Anal.}, 36:\penalty0 305--311,
  1970{\natexlab{a}}.

\bibitem[Hlav\'a\v{c}ek and Ne\v{c}as(1970{\natexlab{b}})]{HlaNec1970b}
I.~Hlav\'a\v{c}ek and J.~Ne\v{c}as.
\newblock On inequalities of {K}orn's type. {II}. {A}pplications to linear
  elasticity.
\newblock \emph{Arch. Rational Mech. Anal.}, 36:\penalty0 312--334,
  1970{\natexlab{b}}.

\bibitem[Hlav\'a\v{c}ek et~al.(1988)Hlav\'a\v{c}ek, Haslinger, Ne\v{c}as, and
  Lov\'i\v{s}ek]{HHNL1988}
I.~Hlav\'a\v{c}ek, J.~Haslinger, J.~Ne\v{c}as, and J.~Lov\'i\v{s}ek.
\newblock \emph{{S}olution of {V}ariational {I}nequalities in {M}echanics}.
\newblock {S}pringer-{V}erlag, {N}ew {Y}ork, 1988.

\bibitem[Jouvet and Bueler(2012)]{JB12}
G.~Jouvet and E.~Bueler.
\newblock Steady, shallow ice sheets as obstacle problems: well-posedness and
  finite element approximation.
\newblock \emph{SIAM J. Appl. Math.}, 72\penalty0 (4):\penalty0 1292--1314,
  2012.

\bibitem[Kikuchi and Oden(1988)]{KikuchiOden1988}
N.~Kikuchi and J.~T. Oden.
\newblock \emph{Contact problems in elasticity: a study of variational
  inequalities and finite element methods}, volume~8.
\newblock Society for Industrial and Applied Mathematics (SIAM), Philadelphia,
  PA, 1988.

\bibitem[Kr\"{o}mer and Roub\'{\i}\v{c}ek(2020)]{KR20}
S.~Kr\"{o}mer and T.~Roub\'{\i}\v{c}ek.
\newblock Quasistatic viscoelasticity with self-contact at large strains.
\newblock \emph{J. Elasticity}, 142\penalty0 (2):\penalty0 433--445, 2020.

\bibitem[Leoni(2017)]{Leoni2017}
G.~Leoni.
\newblock \emph{A first course in {S}obolev spaces}, volume 181 of
  \emph{Graduate Studies in Mathematics}.
\newblock American Mathematical Society, Providence, RI, second edition, 2017.

\bibitem[Lions(1969)]{Lions1969}
J.~L. Lions.
\newblock \emph{Quelques m\'{e}thodes de r\'{e}solution des probl\`emes aux
  limites non lin\'{e}aires}.
\newblock Dunod; Gauthier-Villars, Paris, 1969.

\bibitem[Miyoshi(1985)]{Miyoshi1985}
T.~Miyoshi.
\newblock \emph{Foundations of the numerical analysis of plasticity}, volume
  107 of \emph{North-Holland Mathematics Studies}.
\newblock North-Holland Publishing Co., Amsterdam; Kinokuniya Company Ltd.,
  Tokyo, 1985.
\newblock Lecture Notes in Numerical and Applied Analysis, 7.

\bibitem[Ne\v{c}as(2012)]{Nec67}
J.~Ne\v{c}as.
\newblock \emph{Direct {M}ethods in the {T}heory of {E}lliptic {E}quations}.
\newblock Springer, Heidelberg, 2012.

\bibitem[Ne\v{c}as and Hlav\'{a}\v{c}ek(1980)]{NecHla1981}
J.~Ne\v{c}as and I.~Hlav\'{a}\v{c}ek.
\newblock \emph{Mathematical theory of elastic and elasto-plastic bodies: an
  introduction}, volume~3 of \emph{Studies in Applied Mechanics}.
\newblock Elsevier Scientific Publishing Co., Amsterdam-New York, 1980.

\bibitem[Nitsche(1981)]{Nit1981}
J.~A. Nitsche.
\newblock On {K}orn's second inequality.
\newblock \emph{RAIRO Anal. Num\'{e}r.}, 15\penalty0 (3):\penalty0 237--248,
  1981.

\bibitem[Papageorgiou and Kyritsi-Yiallourou(2009)]{YP}
N.~S. Papageorgiou and S.~T. Kyritsi-Yiallourou.
\newblock \emph{Handbook of applied analysis}, volume~19 of \emph{Advances in
  Mechanics and Mathematics}.
\newblock Springer, New York, 2009.

\bibitem[Piersanti(2020{\natexlab{a}})]{Pie2019}
P.~Piersanti.
\newblock An existence and uniqueness theorem for the dynamics of flexural
  shells.
\newblock \emph{Math. Mech. Solids}, 25\penalty0 (2):\penalty0 317--336,
  2020{\natexlab{a}}.

\bibitem[Piersanti(2020{\natexlab{b}})]{Pie2020}
P.~Piersanti.
\newblock A time-dependent obstacle problem in linearised elasticity.
\newblock \emph{Nonlinear Anal.}, 192:\penalty0 111660, 17, 2020{\natexlab{b}}.

\bibitem[Piersanti and Temam(2023)]{PieTem2023}
P.~Piersanti and R.~Temam.
\newblock On the dynamics of grounded shallow ice sheets: Modelling and
  analysis.
\newblock \emph{Adv. Nonlinear Anal.}, 12\penalty0 (1):\penalty0 40 pp., 2023.

\bibitem[Piersanti et~al.(2022)Piersanti, White, Dragnea, and Temam]{PWDT3D}
P.~Piersanti, K.~White, B.~Dragnea, and R.~Temam.
\newblock A three-dimensional discrete model for approximating the deformation
  of a viral capsid subjected to lying over a flat surface in the static and
  time-dependent case.
\newblock \emph{{A}nal. {A}ppl. (Singap.)}, 20\penalty0 (6):\penalty0
  1159--1191, 2022.

\bibitem[Raviart and Thomas(1983)]{RT83}
P.-A. Raviart and J.-M. Thomas.
\newblock \emph{Introduction \`a l'analyse num\'{e}rique des \'{e}quations aux
  d\'{e}riv\'{e}es partielles}.
\newblock Collection Math\'{e}matiques Appliqu\'{e}es pour la Ma\^{i}trise.
  [Collection of Applied Mathematics for the Master's Degree]. Masson, Paris,
  1983.

\bibitem[Reddy(2013)]{CambridgeVisco2013}
J.~N. Reddy.
\newblock Linearized viscoelasticity.
\newblock In \emph{An Introduction to Continuum Mechanics}, chapter~9.
  Cambridge University Press, 2013.
\newblock URL
  \url{https://www.cambridge.org/core/books/an-introduction-to-continuum-mechanics/linearized-viscoelasticity/C4836A71EBA146CE1E26ADA47530C2B1}.

\bibitem[Rodr\'{\i}guez-Ar\'{o}s(2018)]{Rodri2018}
A.~Rodr\'{\i}guez-Ar\'{o}s.
\newblock Mathematical justification of the obstacle problem for elastic
  elliptic membrane shells.
\newblock \emph{{A}pplicable {A}nal.}, 97:\penalty0 1261--1280, 2018.

\bibitem[Roub\'{\i}\v{c}ek(2013)]{Rou13}
T.~Roub\'{\i}\v{c}ek.
\newblock \emph{Nonlinear partial differential equations with applications},
  volume 153 of \emph{International Series of Numerical Mathematics}.
\newblock Birkh\"{a}user/Springer Basel AG, Basel, second edition, 2013.

\bibitem[Slonimsky(1967)]{Slonimsky67}
G.~L. Slonimsky.
\newblock Laws of mechanical relaxation processes in polymers.
\newblock In \emph{Journal of Polymer Science Part C: Polymer Symposia},
  volume~16, pages 1667--1672, 1967.

\bibitem[Sofonea et~al.(2006)Sofonea, Han, and Shillor]{SHS06}
M.~Sofonea, W.~Han, and M.~Shillor.
\newblock \emph{Analysis and approximation of contact problems with adhesion or
  damage}, volume 276 of \emph{Pure and Applied Mathematics (Boca Raton)}.
\newblock Chapman \& Hall/CRC, Boca Raton, FL, 2006.

\bibitem[Temam(1983)]{Temam1983}
R.~Temam.
\newblock \emph{Probl\`{e}mes math\'{e}matiques en plasticit\'{e}}, volume~12
  of \emph{M\'{e}thodes Math\'{e}matiques de l'Informatique [Mathematical
  Methods of Information Science]}.
\newblock Gauthier-Villars, Montrouge, 1983.

\bibitem[\v{C}e\v{s}\'{\i}k et~al.(2024)\v{C}e\v{s}\'{\i}k, Gravina, and
  Kampschulte]{CGK24}
A.~\v{C}e\v{s}\'{\i}k, G.~Gravina, and M.~Kampschulte.
\newblock Inertial evolution of non-linear viscoelastic solids in the face of
  (self-)collision.
\newblock \emph{Calc. Var. Partial Differential Equations}, 63\penalty0
  (2):\penalty0 Paper No. 55, 48, 2024.

\bibitem[Wang et~al.(2026)Wang, Xiao, Zhou, Han, and Ren]{Han26}
C.~Wang, Y.~Xiao, G.~Zhou, W.~Han, and Y.~Ren.
\newblock Well-posedness of viscoelastic contact problems with modified
  signorini, tresca-friction, and clarke-subdifferential type contact
  conditions incorporating both velocity and displacement.
\newblock \emph{Appl. Math. Optim.}, 93\penalty0 (4), 2026.
\newblock URL \url{https://doi.org/10.1007/s00245-025-10356-1}.

\bibitem[Young(1912)]{Young1912}
W.~H. Young.
\newblock On {C}lasses of {S}ummable {F}unctions and their {F}ourier {S}eries.
\newblock \emph{{P}roc. {R}. {S}oc. {L}ond. {S}er. {A} {M}ath. {P}hys. {E}ng.
  {S}ci.}, 87\penalty0 (594):\penalty0 225--229, 1912.

\bibitem[Zhang et~al.(2025)Zhang, Barboteu, and Cao]{ZBC25}
Z.~Zhang, M.~Barboteu, and J.~Cao.
\newblock Mixed finite element method for multi-layer viscoelastic contact
  systems with time-dependent {T}resca friction.
\newblock \emph{J. Sci. Comput.}, 103\penalty0 (2):\penalty0 Paper No. 39, 34,
  2025.

\end{thebibliography}
	\newpage
	\renewcommand{\glossarysection}[1][]{}
	\printnoidxglossary[type=symbols,style=long,title={\scshape{List of Symbols}},toctitle={List of Symbols}]
\end{document}